\newtheorem{theorem}{Theorem}[section]
\newtheorem{proposition}[theorem]{Proposition}
\newtheorem{lemma}[theorem]{Lemma}
\newtheorem{conjecture}[theorem]{Conjecture}
\theoremstyle{definition}
\newtheorem{remark}[theorem]{Remark}
\newcommand{\R}{{\mathbb{R}}}
\newcommand{\fp}{\text{FP}}
\newcommand{\pfp}{\text{PFP}}
\newcommand{\Sb}{\mathbb{S}}
\newcommand{\Fc}{{\mathcal{F}}}
\newcommand{\Pc}{{\mathcal{P}}}
\newcommand{\sign}{\text{sign}}
\newcommand{\Xperp}[1]{{X_{#1}^{\perp}}}
\newcommand{\eps}{\epsilon}
\newcommand{\pit}{\frac{\pi}{2}}
\title[Phase transitions for frame potentials]{Phase transitions for the minimizers of the $p^{th}$ frame potentials in $\mathbb{R}^2$}
\date{\today}
\begin{document}

\author{Radel Ben Av }
\address{Department Of Computer Science, Holon Institute Of Technology, Holon, Israel, 5810201}\email{benavr@hit.ac.il}

\author{Xuemei Chen }
\address{Department  of  Mathematics  and  Statistics,  University  of  North  Carolina  Wilmington,  Wilmington,  NC  28409, USA}\email{chenxuemei@uncw.edu}

\author{Assaf Goldberger}
\address{School of Mathematical Sciences, Tel-Aviv University, Tel-Aviv, Isreal, 69978} \email{assafg@tauex.tau.ac.il}

\author{Shujie Kang}
\address{Department of Mathematics,
University of Texas at Arlington, Arlington TX 76019, USA}
\email{shujie.kang@uta.edu}

\author{Kasso A.~Okoudjou}\thanks{K.A.O. was partially supported by a grant from   the National Science Foundation grant DMS 1814253.}
\address{Department of Mathematics,
Tufts University, Medford MA 02131, USA} \email{Kasso.Okoudjou@tufts.edu}

\begin{abstract}
Given $N$  points $X=\{x_k\}_{k=1}^N$ on the unit circle in $\mathbb{R}^2$ and a number $0\leq p \leq \infty$ we investigate the minimizers of the functional $\sum_{k, \ell =1}^N |\langle x_k, x_\ell\rangle|^p$. While it is known that each of these minimizers is a spanning set for $\mathbb{R}^2$, less is known about their number as a function of $p$ and $N$ especially for relatively small $p$. In this  paper we show that there is unique minimum for this functional for all $p\leq \log 3/\log 2$ and all odd $N\geq 3$. In addition, we present some numerical results suggesting the emergence of a phase transition phenomenon for these minimizers. More specifically, for $N\geq 3$ odd, there exists a sequence of number of points $\log 3/\log 2=p_1< p_2< \hdots < p_N\leq 2$ so that a unique (up to some isometries)  minimizer exists on each sub-intervals $(p_k, p_{k+1})$. 

\end{abstract}

\subjclass[2000]{Primary 42C15 Secondary 52A40, 52C17}
\keywords{$p^{th}$ frame potentials, equiangular tight frames, $p$-frame energies}

\maketitle \pagestyle{myheadings} \thispagestyle{plain}
\markboth{R. BEN AV, X. CHEN, A. GOLGBERGER, S. KANG, And K. A. OKOUDJOU}{Phase transition for the minimizers of the $p^{th}$frame potentials in $\mathbb{R}^2$}

\maketitle

\section{Introduction}

Let $S(N,d)$ be the collection of all ordered multisets of   $N$ unit-norm vectors in $\R^d$.  For any $p\in (0, \infty]$, the \emph{$p$-frame potential} of $X=\{x_k\}_{k=1}^{N}\in S(N,d)$ is defined as 
\begin{equation}\label{eq:pfrpot}
\fp_{p,N,d}(X):=\left\{\begin{array}{ll}\displaystyle\sum_{k=1}^{N}\sum_{\ell\neq k}^N|\langle x_k,x_{\ell}\rangle|^p,\, &\text{when}\, p<\infty\\
\displaystyle\max_{ k\neq \ell} |\langle x_{k},x_{\ell}\rangle|,&\text{when }p=\infty.\\
\end{array}\right.
\end{equation} 

The continuity of $\fp_{p, N, d}$ and the compactness of the unit sphere guarantee the existence of a solution to the following optimization problem: 

\begin{equation}\label{equ:min}
\Fc_{p,N,d}:=\min\limits_{X\in S(N,d)}\fp_{p,N,d}(X).
\end{equation}
Finding this minimum value and the corresponding minimizers has been  the subject of several recent investigations \cite{AvGoDuSt, BGMPO22, BilMat18, Bukcox18, CGGKO, CHS21, Glaz19}.
Observe that if $X=\{x_i\}_{i=1}^N $, henceforth referred to as an $N$ point configuration, is a minimizer of $\fp_{p, N, d}$, then so is $X'= \{s_1Ux_{\pi_1},\cdots,s_NUx_{\pi_N}\}$  where $U$ is any orthogonal matrix, $\pi$ is a permutation on $\{1,2,\cdots,N\}$, and $s_i\in \{-1,1\}$. One can check that this correspondence between $X$ and $X'$ defines an equivalence relation on the set of $N$ points configurations.  As such, we say the solution to ~\eqref{equ:min} is unique if it is unique up to this equivalence relation.

In \cite{CGGKO}, the authors proved that for a given dimension $d$ and natural number $N$ the minimizers of \eqref{equ:min} are universal in the sense that the same configurations remain  minimizers (not necessarily unique)  for a variety of kernel functions including the $p$ potential for large ranges of $p$.
 This can be viewed as a special case of a result proved for a broader class of potentials in \cite{CohKum2007}. This universality property of the minimizers of \eqref{equ:min} can also  be viewed as a phase transition phenomenon. For example, with $d=2$ and $N=3$, it is proven in \cite[Corollary 3.7]{EhlOko2012} that for $0<p<\frac{\log(3)}{\log(2)}$, the unique minimizer of $\fp_{p,3,2}$ is $\{e_1,e_1,e_2\}$, where $e_1=(1,0)$ and $e_2=(0,1)$; for $p>\frac{\log(3)}{\log(2)}$, the unique minimizer of $\fp_{p,3,2}$ is $\{e_1, (\cos\frac{\pi}{3},\sin\frac{\pi}{3}), (cos\frac{2\pi}{3},\sin\frac{2\pi}{3})\}$. 
 
 These two types of optimal configurations will be used throughout the paper. For convenience, we let
$$\Xperp{N}:=\left\{\begin{array}{ll}\{\underbrace{e_1,e_1,\cdots,e_1}_{N/2},\underbrace{e_2,e_2,\cdots,e_2}_{N/2}\},&N \text{ is even}\\
\{\underbrace{e_1,e_1,\cdots,e_1}_{(N+1)/2},\underbrace{e_2,\cdots,e_2}_{(N-1)/2}\},&N \text{ is odd}
\end{array}\right.$$
 and 
 $$X_N^{(h)}:=\{(\cos\frac{j\pi}{N},\sin\frac{j\pi}{N}),j=0,1,2,\cdots,N-1\}.$$
 
 When in $\R^2$, each unit norm vector is identified by its angle. We will conveniently adopt the notation that $x(\theta)=\exp(i\theta)=(\cos\theta,\sin\theta)$, and for a set $\Theta$
 \begin{equation}
 X(\Theta)=\{x(\theta):\theta\in\Theta\}.
 \end{equation}
 With this notation, for example, $\Xperp{5}=X(0,0,0,\pit,\pit).$ Moreover, for $0<p<\infty$,
 \begin{equation}\label{equ:fp2}
 \fp_{p,N,2}(X(\theta_1,\theta_2,\cdots,\theta_N))=\sum_{k=1}^{N}\sum_{\ell\neq k}^N| \cos(\theta_k-\theta_{\ell})|^p.
 \end{equation}
 
The phase transitioning phenomenon is different depending on whether $N$ is even or odd. It is proven in \cite{CGGKO} that for $N=4$, $\Xperp{4}$ is the minimizer for $0<p<2$ and $X_4^{(h)}$ minimizes $\fp_{p,4,2}$ for $p>2$.
More generally,  \cite[Theorem 4.9]{EhlOko2012} implies that for even $N$, $\Xperp{N}$ is the unique minimizer if $0<p<2$.
Our understanding of the minimizers for achieving $\Fc_{p, N, 2}$ is still largely incomplete for odd $N\geq 5$  when $p$ is relatively small. 

Our goal in this paper is to focus on the two dimensional case and investigate this phase transition behavior when $N\geq 5$ for small values of $p$. For the sake of completeness, we summarize the known results  in Table~\ref{tab:R2known}. 

\begin{table}[hbt]
\caption{\label{tab:R2known}Existing results for $\R^2$.}
\begin{tabular}{|c|c|c|}
\hline
$p$ & $N\geq5$ & minimizer of $\fp_{p,N,2}$ \\ \hline
$0<p<2$ & even & $X_N^{\perp}$ \cite[Theorem 4.9]{EhlOko2012}\\ \hline
$0<p\leq1.3$ & odd & $X_N^{\perp}$, unique \cite[Theorem 4.2]{GP20} \\ \hline
$2<p\leq4\lfloor N/2\rfloor-2$, even &any $N$  & $X_N^{(h)}$  \cite[Theorem 3.7]{CGGKO}\\ \hline
$p>4\lfloor N/2\rfloor-2$ & any $N$ & $X_N^{(h)}$, unique \cite[Theorem 3.7]{CGGKO} \\ \hline
\end{tabular}
\end{table}

Based on Table~\ref{tab:R2known},  to completely solve the optimization problem~\eqref{equ:min} when $d=2$ one needs to consider the following two cases: 
\begin{enumerate}
    \item $N\geq 5$ and odd with $p \in (1.3, 2)$,
    \item $N\geq 5$ and $p\in (2, 4\lfloor N/2\rfloor -2]$ and $p$ not an even integer.
\end{enumerate} 
One of the objectives of this paper to shed some new lights on these cases. 
In particular, we will extend a result proved in  \cite{GP20}  which states that if $0<p\leq1.3$ the absolute minimum of $\fp_{p,N,2}$ for odd $N$ is $(N-1)^2/2$, which is achieved by $X_N^\perp$.  We show that the result still holds when  $p\le \frac{\log 3}{\log 2}\approx 0.58$.\\

\begin{theorem}\label{thm:main}
	Suppose $0< p\le \frac{\log 3}{\log 2}$ and $N\geq3$ is odd, then the absolute minimum of $\fp_{p,N,2}$ is $(N-1)^2/2$, and the unique optimal configuration is $X_N^\perp$.
\end{theorem}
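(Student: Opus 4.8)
The plan is to work on the real projective line. Since $|\cos(\theta_k-\theta_\ell)|$ depends only on the two directions modulo $\pi$, I would identify each $x(\theta_k)$ with a point of $\mathbb{RP}^1\cong\mathbb{R}/\pi\mathbb{Z}$ and rewrite, using \eqref{equ:fp2},
\[
\fp_{p,N,2}(X(\theta_1,\dots,\theta_N))=\sum_{k\ne\ell}c_{k\ell},\qquad c_{k\ell}:=\cos^p(d_{k\ell}),\quad d_{k\ell}:=\mathrm{dist}_{\mathbb{RP}^1}(\theta_k,\theta_\ell)\in[0,\pit].
\]
In these terms two directions are orthogonal exactly when $d_{k\ell}=\pit$, i.e.\ when $c_{k\ell}=0$; the configuration $\Xperp{N}$ is a pair of perpendicular clusters of sizes $(N+1)/2$ and $(N-1)/2$, its potential is $\tfrac{(N-1)^2}{2}$, and the goal is to show it is the unique minimizer.

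The key analytic ingredient I would isolate is a sharp three-point inequality: for $0<p\le \frac{\log 3}{\log 2}$ and any three directions,
\[
c_{12}+c_{23}+c_{13}\ \ge\ 1,
\]
with equality (for $p$ strictly below the threshold) iff two of the distances equal $\pit$ and the third equals $0$. I would prove this by minimizing $F(s,t)=|\cos s|^p+|\cos t|^p+|\cos(s+t)|^p$, the constraint $d_{12}+d_{23}+d_{13}\equiv 0 \pmod{\pi}$ allowing two free variables. Two one–variable facts drive the estimate: the slice $s=\pit$ reduces to $|\cos t|^p+|\sin t|^p\ge 1$, valid for $0<p\le 2$ by concavity of $u\mapsto u^{p/2}+(1-u)^{p/2}$, which governs the perpendicular configurations; and the equiangular slice $s=t=\tfrac{\pi}{3}$ gives $F=3\cdot 2^{-p}$, which is $\ge 1$ \emph{iff} $p\le \frac{\log 3}{\log 2}$. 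Verifying that these two competing critical points exhaust the minima of $F$ pins the global minimum at $1$, and this comparison is exactly where the threshold $\frac{\log 3}{\log 2}$ enters.

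With this in hand I would induct on odd $N$, the base case $N=3$ being \cite[Corollary 3.7]{EhlOko2012}. For the inductive step, let $X$ be a minimizer and suppose I can locate a perpendicular pair $\{i,i'\}$, so $c_{ii'}=0$. Deleting both points, every surviving direction $j$ satisfies $c_{ij}+c_{i'j}=|\cos u_j|^p+|\sin u_j|^p\ge 1$ where $u_j=d_{ij}$; using the removal identity together with $c_{ii'}=0$ and the inductive bound on the $N-2$ survivors,
\[
\fp_{p,N,2}(X)=\fp_{p,N-2,2}\big(X\setminus\{i,i'\}\big)+2\!\!\sum_{j\ne i,i'}\!\!\big(c_{ij}+c_{i'j}\big)\ \ge\ \frac{(N-3)^2}{2}+2(N-2)=\frac{(N-1)^2}{2}.
\]
For $p<\frac{\log 3}{\log 2}$, equality forces $X\setminus\{i,i'\}\cong \Xperp{N-2}$ (inductive uniqueness) and $u_j\in\{0,\pit\}$ for every $j$, collapsing $X$ onto the two perpendicular directions of $i,i'$ and, by the parity of $N$, onto $\Xperp{N}$; this yields the uniqueness claim.

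The hard part is precisely the hypothesis I assumed: that a minimizer must contain a perpendicular pair, equivalently that its $\mathbb{RP}^1$–diameter equals $\pit$. This cannot follow from stationarity alone, since non-perpendicular configurations such as the equiangular $X_N^{(h)}$ are critical points, so it must come from a value comparison; and it is genuinely threshold-dependent, since for $p>\frac{\log 3}{\log 2}$ the $N=3$ minimizer is the equiangular configuration, which has no perpendicular pair. The difficulty is quantitative: if the diameter $\delta<\pit$ then the diametral pair has $c_{ii'}=\cos^p\delta>0$, and running the removal bound through the plain three-point inequality loses exactly an order-$(N-3)\cos^p\delta$ term, which is why naive triple-summing is tight only at $N=3$. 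My plan is therefore to combine the three-point inequality applied across triples through a diametral pair with the even-case result \cite[Theorem 4.9]{EhlOko2012} and its uniqueness, to show that any minimizer of diameter $<\pit$ can be strictly improved toward a perpendicular two-cluster configuration. Making this improvement effective uniformly over all ``spread out'' configurations is where I expect the main technical obstacle to lie.
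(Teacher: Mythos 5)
Your removal step and the per-point bound $|\cos u|^p+|\sin u|^p\ge 1$ are fine, and you correctly locate where the threshold $\tfrac{\log 3}{\log 2}$ must enter; but the hypothesis you defer --- that a global minimizer contains an \emph{exactly} perpendicular pair --- is not a technical remainder, it is the theorem's actual content, and your sketch for it does not close. Two concrete obstructions. First, at the endpoint $p=\tfrac{\log 3}{\log 2}$ your three-point inequality is tight at the equiangular triple ($3\cdot 2^{-p}=1$), so no scheme that sums the triple bound through a diametral pair can produce a uniformly \emph{strict} improvement of spread-out configurations at the threshold; indeed for $N=3$ at that endpoint $X_3^{(h)}$ ties $\Xperp{3}$, so a minimizer with no perpendicular pair genuinely exists there (this also means the uniqueness claim at the endpoint needs $N\ge 5$, where the paper rescues it by a counting argument, and that your inductive uniqueness, which leans on base-case uniqueness, is only valid for $p$ strictly below the threshold when $N=3$). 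Second, even granting a pair merely close to perpendicular, say $d_{ii'}=\pit-\rho$ with $\rho>0$, the pointwise bound $c_{ij}+c_{i'j}\ge 1$ that your removal needs actually fails for some positions of $\theta_j$ once $p>1.3$; this failure is precisely why the argument of \cite{GP20} stops at $p=1.3$, and it blocks any point-by-point version of your inductive step.

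The paper never proves the perpendicular-pair claim up front; it dichotomizes on whether some difference $\theta_i-\theta_j$ lies within $\pi/6$ of a multiple of $\pit$. If not (Case I), it abandons removal entirely and compares $W_p(\theta)=|\cos\theta|^p$ with the piecewise-linear potential $V(\theta)=\tfrac{2}{\pi}|\theta-\pit|$: on all occurring differences $W_p\ge V$ exactly because $\theta_p\ge\pi/3$ iff $p\le\tfrac{\log 3}{\log 2}$, and Proposition~\ref{prop:54} gives $F_V\ge (N-1)^2/2$ by a sliding argument exploiting piecewise linearity --- thereby sidestepping the stationary-point obstruction (e.g.\ $X_N^{(h)}$) that you correctly note would stall a variational improvement of $\fp_{p,N,2}$ itself. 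If yes (Case II), it removes the pair \emph{closest} to perpendicular without assuming orthogonality, discards the nonnegative cross term $2|\cos(\theta_{i_0}-\theta_{j_0})|^p$, and proves the needed estimate \eqref{diff} only \emph{on average}: each deficient ``Type II'' angle is paired with a ``Type I'' angle so that the pair contributes at least $2$ (Lemma~\ref{caseIIa}, resting on Lemmas~\ref{GG}--\ref{lem:eps}; the threshold enters a second time via $1+3\cdot 2^{-p}\ge 2$ in Lemma~\ref{LL}, the same computation as your equiangular slice). Orthogonality of the removed pair, and hence $\Xperp{N}$, is deduced a posteriori from the equality analysis. In short: your outline reproduces the easy half of the induction, but the missing lemma is essentially all of the paper's Section~\ref{sec2}, and the key idea your plan lacks is the replacement of pointwise bounds by paired/averaged bounds valid for a merely near-perpendicular pair.
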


According to  Theorem~\ref{thm:main} and Table~\ref{tab:R2known}, $p\in \left(\frac{\log 3}{\log 2}, 6 \right)\setminus \{2, 4\}$ is the remaining case  to have a solution to ~\eqref{equ:min} when $N=5$. We will present some numerical results dealing with this case, leading to a conjecture (Conjecture~\ref{conj:1}) on the solution to the problem in this case. As a result, we will see that the minimizers of ~\eqref{equ:min} give rise to a phase transition phenomenon. More generally,  our numerical results show that this phase transition phenomenon persists and results in an intricate behavior of the minimizers of $\fp_{p, N,2}$ for $p\in (0, 2)$ when $N\geq 5$ and odd.

The rest of the paper is organized as follows. In Section~\ref{sec2} we state and prove some technical results leading to the proof of Theorem \ref{thm:main}. In  the first part of  Section~\ref{sec3} we show that for $0<p<2$, as $N\to \infty$ the minimizers appear to be $X_N^{\perp}$. When $N$ is even, this result was already established in \cite{EhlOko2012}, so our numerical results indicate that this is still true when $N$ is odd. In either case, the minimizers are asymptotically close to being tight frames \cite{BenFic2003}.
Finally, in second part of Section~\ref{sec3} we present the numerical results pertaining to the general behaviour of the solution of ~\eqref{equ:min} when $0<p< 4\lfloor N/2\rfloor -2$.

\section{The optimal configuration for  $p\le \log 3/\log 2$ with odd $N$}\label{sec2}

As summarized in Table \ref{tab:R2known}, when $0<p\leq2$ and $N$ is even, the  absolute minimum of $\text{FP}_{p,N,2}$  is $(N^2-2N)/2$, uniquely minimized by $\Xperp{N}$~\cite[Theorem 4.9]{EhlOko2012}.
 In this section, we use an induction argument to deal with the case of odd $N$, and provide a proof of Theorem \ref{thm:main}. This induction argument is inspired by the method used in \cite{GP20}, which we refine and extend to a larger range of $p$. 

In view of \eqref{equ:fp2}, we define 
\begin{equation}\label{equ:W}
W_p(\theta):=|\cos \theta|^p,
\end{equation}
so $\fp_{p,N,2}(X(\theta_1,\theta_2,\cdots,\theta_N))=\sum_{k=1}^N\sum_{\ell\neq k}^N W_p(\theta_k-\theta_\ell).$

The potential of interest will be compared to the `linearized' potential function 
$$F_V(\theta_1,\ldots,\theta_N):=\sum_{k=1}^N\sum_{\ell\neq k}^N V(\theta_k-\theta_\ell),$$
where 
\begin{equation}\label{equ:V}
V(\theta):=\frac{2}{\pi}|\theta-\frac{\pi}{2}|
\end{equation} is defined for $x\in [0,\pi]$ and extended with period $\pi$ to the real line.

 In a small neighborhood of $\theta=0$ we have $W_p(\theta)\ge V(\theta)$, while in a small neighborhood of $\theta=\frac{\pi}{2}$ the inequality is reversed. Let $\theta_p$ be the maximum value such that $V(\theta)\le W_p(\theta)$ for all $\theta\in [0,\theta_p]$. See Figure \ref{fig:VW}. A simple convexity test shows that when $p=\frac{\log 3}{\log 2}$ we have $\theta_p=\frac{\pi}{3}=\frac{\pi}{2}-\frac{\pi}{6},$ and when  $p<\frac{\log 3}{ \log 2}$ we have $\theta_p>\frac{\pi}{3}$. 

\begin{figure}[hbt]
\caption{}\label{fig:VW}
\begin{tikzpicture}[scale=1.8,
blueline/.style={shape=rectangle, inner sep=0pt, minimum height=0pt, minimum width=10pt,draw=blue, line width=1.5},
  blackdash/.style={shape=rectangle, inner sep=0pt, minimum height=0pt, minimum width=10pt, line width=1.5, draw=black,dashed}]
\draw[->](0,0)--(3.3,0);
\draw[->](0,0)--(0,1.2);
\draw (4.5,0);
\draw[ultra thick,dashed](0,1)--(pi/2,0)--(pi,1);
\draw[ultra thick, blue] plot[domain=0:1.571](\x,{(cos(\x r))^1.58});
\draw[ultra thick, blue] plot[domain=1.571:pi](\x,{(-cos(\x r))^1.58});
\matrix [draw,below left] at (current bounding box.north east) {
  \node [blackdash,label=right:$V(\theta)$] {}; \\
  \node [blueline,label=right:$W_p(\theta)$] {}; \\
};
\draw (pi/2,0) node[below]{$\frac{\pi}{2}$};
\draw (pi/3,1/3)--(pi/3,0)  node[below]{$\theta_p$};
\end{tikzpicture}
\end{figure}

We will need the following observation on $F_V$.

\begin{proposition}\label{prop:54}
For odd $N$, the configuration 
$\Theta_N^\perp=(0,\ldots,0,\frac{\pi}{2},\ldots,\frac{\pi}{2})$ with $(N+1)/2$ 0's is a
global minimizer of $F_V$. The global minimum is thus $F_V(\Theta_N^\perp)=(N-1)^2/2$.
\end{proposition}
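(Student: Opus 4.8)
The plan is to reduce the minimization of $F_V$ to a classical extremal problem about the total pairwise arc-length distance of $N$ points on a circle, where the piecewise-linear (tent) shape of $V$ makes everything completely explicit. First I would rewrite $V$ geometrically: working modulo $\pi$, for $\theta\in[0,\pi]$ one has $V(\theta)=1-\tfrac{2}{\pi}\rho(\theta)$, where $\rho(\theta)=\min(\theta,\pi-\theta)\in[0,\tfrac{\pi}{2}]$ is the geodesic distance from $\theta$ to the nearest multiple of $\pi$; equivalently $\rho(\theta_k-\theta_\ell)$ is the arc-length distance between $\theta_k$ and $\theta_\ell$ on the circle $C=\R/\pi\Z$. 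Summing over ordered pairs gives
\[
F_V(\theta_1,\ldots,\theta_N)=N(N-1)-\frac{2}{\pi}\,D,\qquad D:=\sum_{k\neq\ell}\rho(\theta_k-\theta_\ell),
\]
so minimizing $F_V$ is exactly the same as maximizing the total pairwise distance $D$ of the $N$ points on $C$.

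The key step is an integral (threshold) representation of arc length. For $t\in C$ let $S_t=(t,t+\tfrac{\pi}{2})$ be the half-circle starting at $t$, and let $f_x(t)=\mathbf{1}[x\in S_t]$. As a function of $t$, $f_x$ is the indicator of the arc $(x-\tfrac{\pi}{2},x)$, again of length $\tfrac{\pi}{2}$, and a short computation of the symmetric difference of two such arcs on a circle of circumference $\pi$ yields the identity $\rho(x,y)=\tfrac12\int_0^\pi|f_x(t)-f_y(t)|\,dt$. Substituting this and exchanging the sum with the integral, if $m(t):=\#\{k:\theta_k\in S_t\}$ counts the points caught in the moving half-circle, then
\[
D=\int_0^\pi m(t)\,\bigl(N-m(t)\bigr)\,dt.
\]

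The bound is now immediate: since $m(t)$ is integer-valued, $m(t)(N-m(t))\le\lfloor N/2\rfloor\lceil N/2\rceil=\tfrac{N^2-1}{4}$ for odd $N$, whence $D\le\tfrac{\pi(N^2-1)}{4}$ and therefore $F_V\ge\tfrac{(N-1)^2}{2}$. To finish I would verify that $\Theta_N^\perp$ attains this value: its points occupy the two antipodal locations $0$ and $\tfrac{\pi}{2}$ of $C$, so $m(t)$ equals $(N-1)/2$ or $(N+1)/2$ for every $t$, making the integrand identically $\tfrac{N^2-1}{4}$ and giving $F_V(\Theta_N^\perp)=\tfrac{(N-1)^2}{2}$, as claimed.

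I expect the main obstacle to be purely the geometric bookkeeping behind the distance identity $\rho(x,y)=\tfrac12\int_0^\pi|f_x-f_y|\,dt$ — pinning down the half-circle parametrization, the endpoints modulo $\pi$, and the normalizing constant exactly — since everything downstream is either the elementary estimate $m(N-m)\le\tfrac{N^2-1}{4}$ or a direct evaluation. I note finally that the statement only asserts that $\Theta_N^\perp$ is \emph{a} global minimizer: no uniqueness is claimed, which is consistent with the equality condition $m(t)(N-m(t))=\tfrac{N^2-1}{4}$ for a.e.\ $t$ being satisfied by other configurations as well (for instance, for $N=3$ the equally spaced points on $C$ also minimize $F_V$), so the sharpening to a unique minimizer must come later from the strict inequality $W_p>V$ away from $\tfrac{\pi}{2}$.
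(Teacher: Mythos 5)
Your proof is correct, and it takes a genuinely different route from the paper's. The paper argues softly: it defines the equivalence relation $i\sim j$ iff $\theta_i-\theta_j\equiv 0 \bmod \frac{\pi}{2}$, translates one equivalence class by $\alpha$ and uses the fact that $F_V(\Phi_\alpha)$ is \emph{linear} in $\alpha$ near $0$ (hence constant, by minimality) to slide classes together until some minimizer is supported on $\{0,\frac{\pi}{2}\}$, and then optimizes the split $r$ versus $N-r$. You instead exploit the exact tent shape of $V$: writing $V(\theta)=1-\frac{2}{\pi}\rho(\theta)$ with $\rho$ the geodesic distance on $\R/\pi\Z$, you convert the problem into maximizing the total pairwise distance $D$, and your Crofton-type identity $\rho(x,y)=\frac12\int_0^\pi|f_x(t)-f_y(t)|\,dt$ is right (the symmetric difference of two half-circle arcs at offset $\rho\le\frac{\pi}{2}$ on a circle of circumference $\pi$ has measure exactly $2\rho$, confirming the constant $\frac12$), yielding $D=\int_0^\pi m(t)\bigl(N-m(t)\bigr)\,dt\le \frac{\pi(N^2-1)}{4}$ and hence $F_V\ge\frac{(N-1)^2}{2}$, with equality at $\Theta_N^\perp$. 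What your approach buys: a quantitative lower bound valid for \emph{every} configuration (the paper only produces one nice minimizer), an equality criterion ($m(t)(N-m(t))$ maximal for a.e.\ $t$) that transparently explains the non-uniqueness recorded in the paper's Remark (e.g.\ the family $(0,\ldots,0,\alpha,\frac{\pi}{2},\ldots,\frac{\pi}{2})$ satisfies it), and a free extension to even $N$ via $m(N-m)\le N^2/4$, recovering $F_V\ge (N^2-2N)/2$ there. What the paper's argument buys in exchange: it uses only piecewise linearity of $V$ with kinks on $\frac{\pi}{2}\mathbb{Z}$, not the specific distance identity, so it would survive perturbations of the tent potential for which your closed-form reduction breaks. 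Two cosmetic points: your ``$m(t)$ equals $(N-1)/2$ or $(N+1)/2$ for every $t$'' should read ``for almost every $t$'' (at the finitely many $t$ where the open half-circle has $0$ and $\frac{\pi}{2}$ as endpoints, e.g.\ $t=0$, one gets $m(t)=0$; this is harmless for the integral), and when summing over ordered pairs you should note $\sum_{k\neq\ell}|f_{\theta_k}(t)-f_{\theta_\ell}(t)|=2m(t)(N-m(t))$, which is where the factor $\frac12$ is absorbed — both are bookkeeping, not gaps.
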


\begin{proof}
    For any configuration $\Theta=\{\theta_1,\ldots,\theta_N\}$ on the unit circle let 
    us define a relation $\sim_\Theta$ on the set $\{1,2,\ldots,N\}$ by saying that $i\sim_\Theta j$ if and only if $\theta_i-\theta_j \equiv 0 \mod \frac{\pi}{2}.$ This is an equivalence relation. We will show first that $F_V$ has a global minimizer $\Phi=\{\phi_1,\ldots,\phi_N\}$ whose relation $\sim_\Phi$ has just one equivalence class. This in turn implies that all angles $|\phi_i-\phi_j|$ are either $0$ or $\frac{\pi}{2}$. Then after reordering and a translation we assume without loss of generality that $\Phi=\{0,\ldots,0,\frac{\pi}{2},\ldots,\frac{\pi}{2}\}$ with $r$ 0's and $N-r$ $\frac{\pi}{2}$'s. It is easy to see that $F_V(\Phi)$ is minimal when $r$ is as close as possible to $\frac{N}{2}$, from which the proposition will follow.
    
    So let's assume that we have chosen $\Phi$ to be a global minimizer such that the number $m$ of $\sim_\Phi$ equivalence classes is as minimal as possible. We need to show that $m=1$. If $m>1$, then after reordering we have a partition $A=\{1,\ldots,k\},B=\{k+1,\ldots,N\}$ such that  any $i \in A$ and any $j\in B$  are not $\sim_\Phi$-equivalent. Let $\Phi_\alpha:=\{\phi_1+\alpha,\ldots,\phi_k+\alpha,\phi_{k+1},\ldots,\phi_N\}$, then the function
    $$ H(\alpha):=F_V(\Phi_\alpha), $$
    is linear in a neighborhood of $\alpha=0$. By the minimality of $F_V(\Phi)$, $H(\alpha)$ must be constant, as long as it is linear. So $\Phi_\alpha$ is still a global minimizer, up to the point where some $\phi_i+\alpha_0\equiv \phi_j\mod \frac{\pi}{2}$, $i\le k<j$. Note that the number of equivalence classes of $\sim_{\Phi_{\alpha_0}}$ is now reduced to at most $m-1$, in contrast to the minimality of $m$. So $m=1$, and the proof is complete.
    
\end{proof}

\begin{remark}
    There is no claim here for the uniqueness of the global minimizer of $F_V$. In fact $(0,\ldots,0,\alpha,\frac{\pi}{2},\ldots,\frac{\pi}{2})$ with $\frac{N-1}{2}$ many 0's is a global minimizer for every $0\le \alpha\le \frac{\pi}{2}$.
\end{remark}

We need few technical lemmas from calculus of one variable.

\begin{lemma}\label{GG}
	Let $1\le p\le 2$, and let $\theta_c\in [0,\frac{\pi}{2}]$ be  the angle such that $\cos^2\theta_c=\frac{p-1}{p}$. Let \begin{equation}\label{equ:G}
	G(\theta)=\cos^{p-1}(\theta)\sin(\theta)
	\end{equation} be defined on $[0,\frac{\pi}{2}]$. Then 
	\begin{enumerate}
	    \item[(a)] The function $G(\theta)$ is increasing in the interval $[0,\theta_c]$, and decreasing in the interval $[\theta_c,\pit]$.
	    \item[(b)] For $0\le \theta\le \frac{\pi}{4}$ we have $G(\pit-\theta)\ge G(\theta)$ with equality only at the endpoints.
	    \item[(c)] For $1\le p\le 1.73$ and for every $\alpha>0$ such that $\theta_c+\alpha<\pit$, $G(\theta_c-\alpha)\ge G(\theta_c+\alpha)$.
	    \item[(d)] For $1\le p\le 1.73$, if $x\le \theta_c \le y$ are such that $G(x)=G(y)$, then $\frac{1}{2}(x+y)\le \theta_c$.
	\end{enumerate}
\end{lemma}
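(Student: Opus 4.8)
The plan is to handle the four parts in order, using (a) and (b) as inputs to (c) and (d). For part (a), I would just differentiate. Writing $G(\theta)=\cos^{p-1}\theta\,\sin\theta$ and applying the product rule, one can factor out $\cos^{p-2}\theta>0$ to get $G'(\theta)=\cos^{p-2}\theta\,\bigl(p\cos^2\theta-(p-1)\bigr)$. On $(0,\pit)$ the prefactor is positive, so the sign of $G'$ is that of $p\cos^2\theta-(p-1)$, which is positive precisely when $\cos^2\theta>\frac{p-1}{p}=\cos^2\theta_c$, i.e. when $\theta<\theta_c$. This yields the increase on $[0,\theta_c]$ and the decrease on $[\theta_c,\pit]$. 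For part (b), the key is the co-function identity $G(\pit-\theta)=\sin^{p-1}\theta\,\cos\theta$. Dividing the target inequality $G(\pit-\theta)\ge G(\theta)$ by the positive quantity $\sin\theta\cos\theta$ reduces it to $\sin^{p-2}\theta\ge\cos^{p-2}\theta$; since $p-2\le 0$ and $x\mapsto x^{p-2}$ is nonincreasing, this is equivalent to $\sin\theta\le\cos\theta$, which holds on $[0,\frac{\pi}{4}]$, with the tightness tracked by the endpoint behavior.

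The heart of the argument is part (c), where I would pass to the tangent variable. Taking logarithms, $G(\theta_c-\alpha)\ge G(\theta_c+\alpha)$ is equivalent to $(p-1)\ln\frac{\cos(\theta_c-\alpha)}{\cos(\theta_c+\alpha)}+\ln\frac{\sin(\theta_c-\alpha)}{\sin(\theta_c+\alpha)}\ge 0$. Using the angle-addition formulas together with $\tan\theta_c=(p-1)^{-1/2}$ (which comes from $\cos^2\theta_c=\frac{p-1}{p}$), and setting $q=\sqrt{p-1}$ and $t=\tan\alpha$, the two ratios collapse to $\frac{q+t}{q-t}$ and $\frac{1-qt}{1+qt}$. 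Thus the claim becomes $\Psi(t):=q^2\ln\frac{q+t}{q-t}+\ln\frac{1-qt}{1+qt}\ge 0$, and the admissibility constraint $\theta_c+\alpha<\pit$ translates exactly into $0<t<q$. Now $\Psi(0)=0$, and a direct differentiation collapses to $\Psi'(t)=2q\cdot\frac{t^2(1-q^4)}{(q^2-t^2)(1-q^2t^2)}$, which is strictly positive on $(0,q)$ because $q<1$. Hence $\Psi>0$ and (c) follows. I would note that this route in fact uses only $q<1$, i.e. $p<2$, so the stated bound $p\le 1.73$ is not essential here and is presumably an artifact of a cruder estimate.

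Finally, I would deduce (d) from (c) and (a). Given $x\le\theta_c\le y$ with $G(x)=G(y)$, set $\alpha=y-\theta_c$. After disposing of the boundary cases $\alpha=0$ and $y=\pit$ directly (both forcing $x+y\le 2\theta_c$ since $\theta_c>\frac{\pi}{4}$ when $p<2$), part (c) gives $G(\theta_c-\alpha)\ge G(\theta_c+\alpha)=G(y)=G(x)$. Since both $\theta_c-\alpha$ and $x$ lie in $[0,\theta_c]$, where $G$ is increasing by (a), this forces $\theta_c-\alpha\ge x$, i.e. $x+y\le 2\theta_c$, which is the assertion.

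The step I expect to require the most thought is the substitution in (c): the inequality looks genuinely two-sided (the cosine factor favors the left point while the sine factor favors the right), and the cancellation that makes $\Psi'$ a single positive rational expression only appears after the tangent change of variables and simplification using $\tan\theta_c=(p-1)^{-1/2}$. Everything else is routine monotonicity bookkeeping; the likely pitfall in a more pedestrian attack — controlling $G''$ or Taylor-expanding $G(\theta_c-\alpha)-G(\theta_c+\alpha)$ — is precisely what I would expect to degrade the constant from $2$ down to something like $1.73$.
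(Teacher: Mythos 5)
Your proof is correct, and on parts (a), (b), and (d) it is essentially the paper's: the same factorization $G'(\theta)=\cos^{p-2}\theta\left(p\cos^2\theta-(p-1)\right)$ for (a), the same comparison for (b) (the paper factors $G(\frac{\pi}{2}-\theta)-G(\theta)=\sin^{p-1}\theta\cos^{p-1}\theta\,(\cos^{2-p}\theta-\sin^{2-p}\theta)$, which is your division by $\sin\theta\cos\theta$ in disguise), and the identical reflection-plus-monotonicity deduction of (d) from (c) and (a), where you are in fact slightly more careful than the paper about the boundary cases $\alpha=0$ and $y=\frac{\pi}{2}$. The genuine difference is in (c), and your route is both different and stronger. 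The paper proves (c) by computing $G''\le 0$ and then $G'''\le 0$, the latter by maximizing the quadratic $P_p(u)=-p^3u^2+(p-1)(p^2+(2-p)^2)u-(p-1)(2-p)(3-p)$ in $u=\cos^2\theta$; the resulting expression $6p+28/p-16/p^2+4/p^3-22$ is negative only for $p\lesssim 1.73$, which is exactly where the hypothesis $p\le 1.73$ comes from. Concavity of $G'$ then gives $SG'(\alpha)=-G'(\theta_c-\alpha)-G'(\theta_c+\alpha)\ge -2G'(\theta_c)=0$. Your tangent--logarithm substitution bypasses the third derivative altogether: I checked that with $q=\sqrt{p-1}$ (so $\tan\theta_c=1/q$) and $t=\tan\alpha$ the two ratios do collapse to $\frac{q+t}{q-t}$ and $\frac{1-qt}{1+qt}$, that the constraint $\theta_c+\alpha<\frac{\pi}{2}$ is exactly $0<t<q$ (and $\theta_c-\alpha>0$ is automatic, since $\theta_c\ge\frac{\pi}{4}\ge\frac{\pi}{2}-\theta_c$), and that
\begin{equation*}
\Psi'(t)=\frac{2q\,t^2\,(1-q^4)}{(q^2-t^2)(1-q^2t^2)}>0 \qquad\text{for } 0<t<q<1,
\end{equation*}
so $\Psi>\Psi(0)=0$ on $(0,q)$. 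Hence your argument establishes (c), and with it (d), on the full range $1\le p\le 2$ (with identity at $p=2$, where $q=1$ and $G$ is symmetric about $\theta_c=\frac{\pi}{4}$), confirming your diagnosis that $1.73$ is an artifact of the concavity route rather than a feature of $G$. What each approach buys: the paper's is a soft calculus argument with no change of variables, at the cost of the constant; yours is sharper and arguably cleaner, though the extra range gains nothing downstream in this paper, since Theorem 1.1 only invokes Lemmas PP and RR for $p\le\frac{\log 3}{\log 2}\approx 1.58<1.73$. One shared caveat: the strictness claim in (b) (``equality only at the endpoints'') degenerates at $p=2$, where $G(\frac{\pi}{2}-\theta)\equiv G(\theta)$; both your reduction and the paper's factorization implicitly need $p<2$ for strict inequality in the interior.
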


\begin{proof}
We note that $\theta_c \in [\frac{\pi}{4}, \pit].$

	Part (a) follows from the simple fact that   $G'(\theta)=p\cos^{p-2}\theta(\cos^2\theta-\frac{p-1}{p})$. We see that $G'$ vanishes only at $\theta_c$. 
	
	For the second assertion note that
	\begin{align*}
	 G\left(\pit-\theta\right)-G(\theta)&=\sin^{p-1}\theta\cos\theta-\cos^{p-1}\theta\sin\theta\\ &=\sin^{p-1}\theta\cos^{p-1}\theta (\cos^{2-p}\theta-\sin^{2-p}\theta)\ge 0.
	\end{align*}
	
	To prove (c) we compute few higher order derivatives of $G$:
	\begin{align*}
	    G''(\theta) &= -p^2\sin\theta\left( \cos^{p-1}\theta+\frac{(p-1)(2-p)}{p^2}\cos^{p-3}\theta  \right)\le 0,\\
	    G'''(\theta) &= \cos^{p-4}\theta (-p^3\cos^4\theta +(p-1)(p^2+(2-p)^2)\cos^2\theta-(p-1)(2-p)(3-p)).
	\end{align*}
	We now show that $G'''(\theta)\le 0$. It suffices to show that the quadratic polynomial $P_p(u):=-p^3u^2+(p-1)(p^2+(2-p)^2)u-(p-1)(2-p)(3-p)<0$ for all $u\in[0,1]$. This polynomial is maximized at $u_{max}=(p-1)(p^2+(2-p)^2)/2p^3\in[0,1]$. Substituting $P_p(u_{max})$ we obtain a rational function in $p$: $6p + 28/p - 16/p^2 + 4/p^3 - 22$ which is easily seen to be negative for $1\le p\le 1.73$.
	
	Now it follows that $G'(\theta)$ is concave. Let 
	$$ SG(\alpha)=G(\theta_c-\alpha)-G(\theta_c+\alpha)$$ 
	be defined for $0\le\alpha\le \pit-\theta_c$. Then
	$$SG'(\alpha)=-G'(\theta_c-\alpha)-G'(\theta_c+\alpha)\ge -2G'(\theta_c)=0$$ by the concavity of $G'$. It follows that $SG(\alpha)\ge SG(0)=0$ from which assertion (c) follows.
	
	To prove (d), suppose that $G(x)=G(y)$ for $x\le\theta_c\le y$. Letting $\alpha=y-\theta_c\in[0,\frac{\pi}{2}-\theta_c]$, part (c) implies that $G(2\theta_c-y)=G(\theta_c-\alpha)\ge G(\theta_c+\alpha)= G(y)=G(x)$. Since both $x$ and $2\theta_c-y$ belong to $[0,\theta_c]$, by the monotonicity in part (a) we must have $x\le 2\theta_c-y$, thus arrives at our conclusion.
	\end{proof}

\begin{lemma}\label{KK}
	For $1\le p\le 2$ and $0\le \nu\le \frac{\pi}{2}$
	the function $K(\theta)=\cos^p(\theta)+\cos^p(\frac{\pi}{2}-\nu-\theta)$ in the interval $0\le \theta\le \frac{\pi}{2}-\nu$ is minimized (only) at the endpoints. In particular $K(\theta)\ge 1+\sin^p\nu$.
\end{lemma}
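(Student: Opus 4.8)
The plan is to exploit the symmetry of $K$ and reduce the whole statement to a single monotonicity inequality for the function $G$ from Lemma~\ref{GG}. Setting $s:=\frac{\pi}{2}-\nu\in[0,\frac{\pi}{2}]$, I would write $K(\theta)=\cos^p\theta+\cos^p(s-\theta)$ and observe that $K$ is invariant under the reflection $\theta\mapsto s-\theta$ about the midpoint $\theta=s/2$, with endpoint value $K(0)=K(s)=1+\cos^p s=1+\sin^p\nu$. Hence it suffices to show that $K$ is increasing on $[0,s/2]$ (and, by symmetry, decreasing on $[s/2,s]$), so that its minimum on $[0,s]$ is attained exactly at the endpoints. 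Differentiating gives $K'(\theta)=p\bigl(G(s-\theta)-G(\theta)\bigr)$, with $G(\theta)=\cos^{p-1}\theta\,\sin\theta$ as in \eqref{equ:G}, so the entire lemma comes down to proving
$$G(s-\theta)\ge G(\theta)\qquad\text{for }0\le\theta\le s/2,$$
with strict inequality on the open interval.

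To prove this inequality I would set $x:=\theta$ and $y:=s-\theta$, so that $x\le y$, $x+y=s\le\frac{\pi}{2}$, and $x\le s/2\le\frac{\pi}{4}\le\theta_c$ (recall $\theta_c\in[\frac{\pi}{4},\frac{\pi}{2}]$). If $y\le\theta_c$, then both $x$ and $y$ lie in the interval $[0,\theta_c]$ on which $G$ is increasing by part (a), and $x\le y$ gives $G(x)\le G(y)$ at once. The only remaining and genuinely delicate case is $y>\theta_c$, where $G$ is decreasing at $y$ and a direct comparison breaks down; this is the main obstacle.

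To handle it I would reflect through $\frac{\pi}{2}$. From $x+y\le\frac{\pi}{2}$ we have $x\le\frac{\pi}{2}-y$, and since $y>\theta_c\ge\frac{\pi}{4}$ we get $\frac{\pi}{2}-y<\frac{\pi}{4}\le\theta_c$; thus both $x$ and $\frac{\pi}{2}-y$ lie in the increasing region $[0,\theta_c]$, so part (a) yields $G(x)\le G(\frac{\pi}{2}-y)$. Applying part (b) with the angle $\frac{\pi}{2}-y\in[0,\frac{\pi}{4}]$ then gives $G(\frac{\pi}{2}-y)\le G(y)$, and chaining the two inequalities produces $G(x)\le G(y)$, as required. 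For the strictness claim on $(0,s/2)$ I would note that at least one of the two steps is strict: the first when $\nu>0$ (then $x<\frac{\pi}{2}-y$ with both strictly inside $[0,\theta_c)$, where $G'>0$), and the second when $\nu=0$ (then $\frac{\pi}{2}-y$ is interior to $(0,\frac{\pi}{4})$, where part (b) is strict); the case $y\le\theta_c$ is strict directly since $x<y$ with $G$ strictly increasing up to $\theta_c$.

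Putting these together shows $K'\ge 0$ on $[0,s/2]$ with strict positivity in the interior, hence $K$ is minimized only at the endpoints, giving $K(\theta)\ge K(0)=1+\sin^p\nu$. I would finally remark that this argument invokes only parts (a) and (b) of Lemma~\ref{GG}, both valid on the full range $1\le p\le 2$, so that Lemma~\ref{KK} holds for all such $p$ with no need for the stronger restriction $p\le 1.73$ required by parts (c) and (d).
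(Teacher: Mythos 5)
Your proof is correct and takes essentially the same route as the paper's: both compute $K'(\theta)=p\bigl(G\bigl(\tfrac{\pi}{2}-\nu-\theta\bigr)-G(\theta)\bigr)$ and settle the resulting comparison of $G$-values by chaining parts (a) and (b) of Lemma~\ref{GG} through a reflection about $\tfrac{\pi}{2}$, using only facts valid for all $1\le p\le 2$. The difference is purely organizational — you prove $K'\ge 0$ directly on the half-interval $\bigl[0,\tfrac{1}{2}\bigl(\tfrac{\pi}{2}-\nu\bigr)\bigr]$ (splitting on whether $s-\theta$ exceeds $\theta_c$) and conclude by the symmetry of $K$, whereas the paper runs the same two inequalities as a contradiction argument showing the midpoint is the only interior critical point.
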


\begin{proof}
	We notice
	$$K'(\theta)=-pG(\theta)+pG\left(\pit-\nu-\theta\right),$$
where $G$ is defined in \eqref{equ:G}. Moreover, $K'(0)=-pG(0)+pG\left(\pit-\nu\right)=pG\left(\pit-\nu\right)\geq0$, and $K'(\pit-\nu)=-pG\left(\pit-\nu\right)\le 0$.
In particular the endpoints are local minima. We will show that $K'(\theta)$ vanishes internally only at the midpoint $\theta=\frac{1}{2}(\pit-\nu)$, from which the lemma follows. 

Suppose that $0<\theta_0<\pit-\nu$ is an angle such that $K'(\theta_0)=0$. Then by Lemma \ref{GG}(a) either that $\theta_0=\pit-\nu-\theta_0$, or that $\theta_c$ lies between $\theta_0$ and $\pit-\nu-\theta_0$. In the first case $\theta_0$ is the midpoint. In the second case, by symmetry we may assume that $0<\theta_0\le \theta_c\le \pit-\nu-\theta_0$.
Since $0<\theta_0\le \frac{\pi}{4}$, Lemma \ref{GG}(b) implies 
\begin{equation}\label{equ:c1}
G(\theta_0)\le G(\pit-\theta_0),
\end{equation} with equality only at $\theta_0=\frac{\pi}{4}$. On the other hand, we have $\theta_c\le \pit-\nu-\theta_0\le \pit-\theta_0$, so Lemma \ref{GG}(a) implies 
\begin{equation}\label{equ:c2}
G(\pit-\nu-\theta_0)\geq G(\pit-\theta_0).
\end{equation}
\eqref{equ:c1} and \eqref{equ:c2} contradicts to $K'(\theta_0)=0$, unless $\theta_0=\frac{\pi}{4}$ and $\nu=0$. But in this case again $\theta_0$ is the midpoint. This completes the proof.
\end{proof}

\begin{lemma}\label{LL}
	For $\frac{4}{3}\le p\le \frac{\log 3}{\log 2}$, the function $L(\alpha)=1+2\sin^p\left(\frac{\alpha}{2}\right)+\cos^p\alpha$, $\alpha\in [0,\frac{\pi}{3}]$ is minimized at endpoints. In particular $L(\alpha)\ge 2$.
\end{lemma}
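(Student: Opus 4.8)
The plan is to show that $L$ attains its minimum on $[0,\tfrac{\pi}{3}]$ at one of the two endpoints, and then to compare the endpoint values. First I would record the boundary data: since $\sin 0=0$ and $\cos 0=1$ one gets $L(0)=2$, while $\sin\tfrac{\pi}{6}=\cos\tfrac{\pi}{3}=\tfrac12$ gives $L(\tfrac{\pi}{3})=1+3\cdot 2^{-p}$. The hypothesis $p\le \log 3/\log 2$ is exactly the condition $2^{-p}\ge \tfrac13$, so $L(\tfrac{\pi}{3})\ge 2$; thus both endpoint values are $\ge 2$, with equality at $\alpha=0$ always and at $\alpha=\tfrac{\pi}{3}$ precisely when $p=\log 3/\log 2$. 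Hence the entire statement (including the ``in particular'') reduces to the single claim that $\min_{[0,\pi/3]}L=\min\{L(0),L(\tfrac{\pi}{3})\}$, i.e. that $L$ has no interior local minimum.

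To control the interior I would differentiate. Using $\sin\alpha=2\sin\tfrac{\alpha}{2}\cos\tfrac{\alpha}{2}$, a short computation factors the derivative as
\[
L'(\alpha)=p\,\sin\tfrac{\alpha}{2}\cos\tfrac{\alpha}{2}\,\Big(\sin^{p-2}\tfrac{\alpha}{2}-2\cos^{p-1}\alpha\Big).
\]
On $(0,\tfrac{\pi}{3})$ the prefactor $p\sin\tfrac{\alpha}{2}\cos\tfrac{\alpha}{2}$ is strictly positive, so the sign of $L'$ agrees with the sign of $Q(\alpha):=\sin^{p-2}\tfrac{\alpha}{2}-2\cos^{p-1}\alpha$. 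Since $\cos\alpha\ge\cos\tfrac{\pi}{3}=\tfrac12>0$ throughout, both terms of $Q$ are positive, and I can pass to the logarithmic form: $Q(\alpha)>0$ if and only if $R(\alpha)>0$, where
\[
R(\alpha):=(p-2)\ln\sin\tfrac{\alpha}{2}-(p-1)\ln\cos\alpha-\ln 2 .
\]
The point of this substitution is that $R$ is far more rigid than $L$ itself.

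The key structural fact I would establish is that $R$ is strictly convex on $(0,\tfrac{\pi}{3})$. Indeed $R'(\alpha)=\tfrac{p-2}{2}\cot\tfrac{\alpha}{2}+(p-1)\tan\alpha$, and one more differentiation gives $R''(\alpha)=\tfrac{2-p}{4}\csc^2\tfrac{\alpha}{2}+(p-1)\sec^2\alpha$, which is positive because $1<p<2$ makes both coefficients $2-p$ and $p-1$ positive. I would then read off the boundary behaviour: $R(\alpha)\to+\infty$ as $\alpha\to0^+$ (the term $(p-2)\ln\sin\tfrac{\alpha}{2}$ blows up to $+\infty$ since $p-2<0$), while a direct substitution gives the clean identity $R(\tfrac{\pi}{3})=0$. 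Convexity forces the sublevel set $\{\alpha:R(\alpha)\le 0\}$ to be a subinterval of $(0,\tfrac{\pi}{3}]$; together with $R(0^+)=+\infty$ and $R(\tfrac{\pi}{3})=0$ this subinterval must have the form $[\alpha_0,\tfrac{\pi}{3}]$ for some $\alpha_0\in(0,\tfrac{\pi}{3}]$ (degenerating to the single point $\tfrac{\pi}{3}$ when there is no sign change). Consequently $L'>0$ on $(0,\alpha_0)$ and $L'\le 0$ on $(\alpha_0,\tfrac{\pi}{3})$, so $L$ increases and then decreases and its minimum over $[0,\tfrac{\pi}{3}]$ is attained at an endpoint. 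Combined with the first paragraph this gives $L(\alpha)\ge 2$.

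The main obstacle is precisely this unimodality of $L$: the raw quantity $Q$ mixes a negative power of $\sin\tfrac{\alpha}{2}$ with a positive power of $\cos\alpha$ and is not visibly sign-definite, and $L$ is neither convex nor monotone on the interval. The device that dissolves the difficulty is the passage to $R=\ln$ of the quotient, after which a one-line second-derivative computation delivers convexity and hence the ``at most one sign change'' we need. I would remark in passing that this argument never actually uses the lower bound $p\ge\tfrac43$ — convexity of $R$ holds for all $1<p<2$ — and that the upper bound $p\le\log 3/\log 2$ enters only through the endpoint inequality $L(\tfrac{\pi}{3})\ge 2$, which genuinely fails once $p$ exceeds $\log 3/\log 2$.
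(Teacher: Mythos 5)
Your proof is correct, and it takes a genuinely different route from the paper's. The paper's proof substitutes $u=\sin^2\left(\frac{\alpha}{2}\right)$, rewriting $L$ as $M(u)=1+2u^{p/2}+(1-2u)^p$ on $\left[0,\frac14\right]$, and then argues through the second derivative: $M''$ is increasing, runs from $-\infty$ at $0^+$ to $M''\left(\frac14\right)=\left(\frac12\right)^{p-3}p(3p-4)>0$, so $M'$ dips and rises with a unique interior minimum; since $M'>0$ near $0$ and $M'\left(\frac14\right)=0$, the derivative changes sign exactly once and $M$ has a single interior local maximum. You instead factor $L'(\alpha)=p\sin\frac{\alpha}{2}\cos\frac{\alpha}{2}\bigl(\sin^{p-2}\frac{\alpha}{2}-2\cos^{p-1}\alpha\bigr)$ directly and pass to the logarithmic form $R$, whose strict convexity ($R''(\alpha)=\frac{2-p}{4}\csc^2\frac{\alpha}{2}+(p-1)\sec^2\alpha>0$ for $1<p<2$) together with the boundary data $R(0^+)=+\infty$, $R\left(\frac{\pi}{3}\right)=0$ forces at most one sign change of $L'$. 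Both arguments establish the same unimodal (increase-then-decrease) shape and finish with the identical endpoint computation $L(0)=2$, $L\left(\frac{\pi}{3}\right)=1+3\cdot 2^{-p}\ge 2$ exactly when $p\le \frac{\log 3}{\log 2}$; your identity $R\left(\frac{\pi}{3}\right)=0$ is the counterpart of the paper's $M'\left(\frac14\right)=0$, both expressing that $\alpha=\frac{\pi}{3}$ is always a critical point. Your log-convexity device buys a strictly wider range: it needs only $1<p<2$, whereas the paper's mechanism hinges on $M''\left(\frac14\right)>0$, i.e.\ $p>\frac43$ (at $p=\frac43$ that value vanishes, and for smaller $p$ the monotone-second-derivative argument as written no longer applies), so your closing observation that the hypothesis $p\ge\frac43$ is never used is accurate. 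What the paper's substitution buys in exchange is a computation entirely in power functions, with no logarithms, and its remark that endpoint-minimization persists for all $\frac43\le p\le 2$ — a range your argument also covers for $p<2$.
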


\begin{proof}
	We make a change of variables, $u=\sin^2\left(\frac{\alpha}{2}\right)$, and $\cos\alpha$ becomes $1-2u$. Thus $$L(\alpha)=M(u):=1+2u^{p/2}+(1-2u)^p, \ \ u\in\left[0,\frac{1}{4}\right].$$
	We compute
	\begin{align*}
		M'(u)&= pu^{p/2-1}-2p(1-2u)^{p-1},\\
		M''(u)&=p\left(\frac{p}{2}-1\right)u^{p/2-2}+4p(p-1)(1-2u)^{p-2},
	\end{align*} 
	and notice that $M''$ is an increasing function in the interval $[0,\frac{1}{4}]$. Moreover $M''(0^+)=-\infty$ and $M''(\frac{1}{4})=\left(\frac{1}{2}\right)^{p-3}p(3p-4)>0$. Thus $M''$ has a unique zero at some point $0<u_1<\frac{1}{4}$. Consequently $M'$ has a unique local minimum at $u_1$, and it is decreasing in $[0,u_1]$ and increasing at $[u_1,\frac{1}{4}]$. Since $M'$ is positive around $u=0$ and $M'\left(\frac{1}{4}\right)=0$, it is positive at $(0,u_0)$ and negative at $(u_0,\frac{1}{4})$ for some $0<u_0<u_1$. It follows that $M$ has a unique local maximum at $u=u_0$, and that this function is minimized at the endpoints. 
	
	$L(0)=2$ and $L(\pi/3)=1+3(\frac{1}{2})^p\geq1+3(\frac{1}{2})^{\frac{\log3}{\log2}}=2$.
\end{proof}

\begin{remark}
    As the proof shows, this function is minimized at the endpoints for $\frac{4}{3}\le p\le 2$, but
	crucially for $p\le\frac{\log 3}{\log 2}$ the values at the endpoints are $\ge 2$. This is no longer true for larger $p$.
\end{remark}

\begin{lemma}\label{PP}
	For $\frac{4}{3}< p\le 1.73$, the function 
	$P(\alpha)=\cos^p(\alpha)+\cos^p(\frac{2\pi}{3}-\alpha)$ for $\frac{\pi}{3}\le \alpha\le \pit$ is minimized at $\alpha=\frac{\pi}{3}$. In particular $P(\alpha)\ge \frac{2}{3}$.
\end{lemma}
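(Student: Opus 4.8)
The plan is to study the sign of $P'(\alpha)$ and show that $P$ is strictly increasing on $[\frac{\pi}{3},\pit]$, so that its minimum occurs at the left endpoint $\alpha=\frac{\pi}{3}$. Differentiating and writing everything in terms of the function $G$ from Lemma~\ref{GG}, one gets
\[
P'(\alpha)=-pG(\alpha)+pG\!\left(\tfrac{2\pi}{3}-\alpha\right)=p\Bigl(G\!\left(\tfrac{2\pi}{3}-\alpha\right)-G(\alpha)\Bigr),
\]
which is valid because for $\alpha\in[\frac{\pi}{3},\pit]$ both arguments $\alpha$ and $\frac{2\pi}{3}-\alpha$ lie in $[\frac{\pi}{6},\pit]$, where $\cos$ is positive and $G$ is defined and continuous. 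At the endpoint $\alpha=\frac{\pi}{3}$ the two arguments coincide, so $P'(\frac{\pi}{3})=0$; the task is therefore to prove $G(\frac{2\pi}{3}-\alpha)\ge G(\alpha)$ (in fact $>$) on the open interval.

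The decisive structural fact I would isolate first is that the \emph{strict} hypothesis $p>\frac{4}{3}$ forces $\theta_c<\frac{\pi}{3}$. Indeed $\cos^2\theta_c=\frac{p-1}{p}=1-\frac1p$ is increasing in $p$ and equals $\frac14$ exactly at $p=\frac{4}{3}$, i.e. $\theta_c=\frac{\pi}{3}$ there; hence $\theta_c$ strictly decreases below $\frac{\pi}{3}$ for $p>\frac{4}{3}$. Consequently every $\alpha$ in the open interval satisfies $\alpha>\frac{\pi}{3}>\theta_c$, so $\alpha$ always sits on the decreasing branch of $G$.

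Next I would rule out interior critical points, i.e. solutions of $G(\alpha)=G(\frac{2\pi}{3}-\alpha)$ with $\alpha\in(\frac{\pi}{3},\pit)$. Set $x=\frac{2\pi}{3}-\alpha\in(\frac{\pi}{6},\frac{\pi}{3})$ and $y=\alpha$, so that $\tfrac12(x+y)=\frac{\pi}{3}$. If $x\ge\theta_c$, then $x,y$ both lie in $[\theta_c,\pit]$ with $x<y$, and the strict monotonicity of Lemma~\ref{GG}(a) gives $G(x)>G(y)$, so there is no critical point and in fact $P'(\alpha)>0$. If instead $x<\theta_c<y$, then an equality $G(x)=G(y)$ would, by Lemma~\ref{GG}(d), force $\tfrac12(x+y)\le\theta_c$, i.e. $\frac{\pi}{3}\le\theta_c$, contradicting $\theta_c<\frac{\pi}{3}$. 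This straddling case is the main obstacle, and it is exactly where both Lemma~\ref{GG}(d) and the strictness $p>\frac{4}{3}$ (equivalently $\theta_c<\frac{\pi}{3}$) are indispensable; at $p=\frac{4}{3}$ the midpoint meets $\theta_c$ and the argument degenerates.

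Having shown that $P'$ never vanishes on the connected interval $(\frac{\pi}{3},\pit)$, continuity of $P'$ together with the intermediate value theorem forces $P'$ to have a single sign there. To fix this sign I would take the limit as $\alpha\to\pit^{-}$: since $p-1>0$ we have $G(\alpha)=\cos^{p-1}\alpha\,\sin\alpha\to 0$ while $G(\frac{2\pi}{3}-\alpha)\to G(\frac{\pi}{6})>0$, so $P'>0$ near $\pit$ and hence throughout $(\frac{\pi}{3},\pit)$. Thus $P$ is strictly increasing and attains its minimum at $\alpha=\frac{\pi}{3}$, where $P(\frac{\pi}{3})=2\cos^p\frac{\pi}{3}=2^{1-p}$. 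Finally, since $2^{1-p}$ is decreasing in $p$ and equals $\frac23$ at $p=\frac{\log 3}{\log 2}$, one obtains $P(\alpha)\ge 2^{1-p}\ge\frac23$ over the range $p\le\frac{\log 3}{\log 2}$ in which this numerical bound is invoked.
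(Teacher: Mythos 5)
Your proof is correct, and its engine is the same as the paper's: both write the derivative in terms of $G$ and rule out interior critical points by noting that, via Lemma \ref{GG}(a), a critical point forces the straddling configuration $x\le\theta_c\le y$ whose midpoint is $\frac{\pi}{3}$, which Lemma \ref{GG}(d) forbids because the strict hypothesis $p>\frac{4}{3}$ gives $\theta_c<\frac{\pi}{3}$ (the paper phrases this after the cosmetic substitution $\alpha=\frac{\pi}{3}+\beta$). You diverge at the endgame in two small but worthwhile ways. First, the paper concludes only that the minimum lies at an endpoint and then compares $P\left(\frac{\pi}{3}\right)=2^{1-p}$ with $P\left(\frac{\pi}{2}\right)=\left(\frac{\sqrt{3}}{2}\right)^{p}$, whereas you fix the sign of $P'$ near $\frac{\pi}{2}$ (where $G(\alpha)\to 0$ while $G\left(\frac{2\pi}{3}-\alpha\right)\to G\left(\frac{\pi}{6}\right)>0$) and use continuity of $P'$ on the connected interval to get strict monotonicity; this is slightly stronger (the minimum is attained only at $\alpha=\frac{\pi}{3}$) and dispenses with the endpoint comparison altogether. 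Second, your closing caveat is not just prudence but an actual correction: $P\left(\frac{\pi}{3}\right)=2^{1-p}\ge\frac{2}{3}$ holds precisely when $p\le\frac{\log 3}{\log 2}\approx 1.585$, so the ``in particular $P(\alpha)\ge\frac{2}{3}$'' clause of the lemma, asserted for all $\frac{4}{3}<p\le 1.73$, is literally false on $\left(\frac{\log 3}{\log 2},1.73\right]$, and the paper's final line ``$P(\alpha)\geq P(\pi/3)\geq 2/3$'' repeats the slip; restricting the bound to the range where it is invoked, as you do, is the right repair, and it is harmless to the paper since Theorem \ref{thm:main} only concerns $p\le\frac{\log 3}{\log 2}$.
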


\begin{proof}
	We change variables to $\alpha=\frac{\pi}{3}+\beta$, so we need to minimize $Q(\beta)=\cos^p\left( \frac{\pi}{3}+\beta\right)+
	\cos^p\left( \frac{\pi}{3}-\beta\right)$, $0\le \beta\le \frac{\pi}{6}$. Differentiating,
	$$Q'(\beta)=-pG\left(\frac{\pi}{3}+\beta\right)+pG\left(\frac{\pi}{3}-\beta\right).$$ If the minimum of $Q$ occurs internally at some $\beta_0$, we must have by Lemma \ref{GG}(a) that 
	$$ \frac{\pi}{3}-\beta_0\le \theta_c\le \frac{\pi}{3}+\beta_0.$$
	Note that for $p$ in the range of the lemma, $\frac{\pi}{4}\le \theta_c<\frac{\pi}{3}$. Since the average of $x=\frac{\pi}{3}-\beta_0$ and $y=\frac{\pi}{3}+\beta_0$ is $\frac{\pi}{3}>\theta_c$, we obtain a contradiction to Lemma \ref{GG}(d), which proves that the minimum occurs at the end points.
	
Due to the restriction on $p$, $P(\pi/3)=(1/2)^{p-1}<(\frac{\sqrt{3}}{2})^p=P(\pi/2)$. So $P(\alpha)\geq P(\pi/3)\geq 2/3$.
\end{proof}

\begin{lemma}\label{RR} Suppose  that $\frac{4}{3}< p\le 1.73$ and  $0\le \alpha\le \frac{\pi}{3}$. The function  $R(\rho)=\sin^p\rho+\sin^p(\alpha-\rho)$ defined on  $0\le \rho\le \frac{\alpha}{2}$ is minimized at one of the endpoints. 
\end{lemma}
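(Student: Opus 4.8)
The plan is to show that $R'(\rho)\le 0$ throughout $\left[0,\frac{\alpha}{2}\right]$, so that $R$ is non-increasing and attains its minimum at the right endpoint $\rho=\frac{\alpha}{2}$ (which is indeed one of the two endpoints). Differentiating gives
\[
R'(\rho)=p\bigl(\sin^{p-1}\rho\,\cos\rho-\sin^{p-1}(\alpha-\rho)\,\cos(\alpha-\rho)\bigr),
\]
and the first step is to rewrite this in terms of the function $G$ from \eqref{equ:G}. Since $\sin^{p-1}\theta\,\cos\theta=\cos^{p-1}(\pit-\theta)\,\sin(\pit-\theta)=G(\pit-\theta)$, setting $y=\pit-\rho$ and $x=\pit-\alpha+\rho$ turns this into $R'(\rho)=p\bigl(G(y)-G(x)\bigr)$. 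Thus the whole lemma reduces to the single inequality $G(y)\le G(x)$ holding for every $\rho\in\left[0,\frac{\alpha}{2}\right]$.

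Next I would record the constraints on the pair $(x,y)$. For $\rho\in\left[0,\frac{\alpha}{2}\right]$ one has $x\le y$, both lying in $\left[\pit-\alpha,\pit\right]\subseteq\left[0,\pit\right]$, with average $\frac{x+y}{2}=\frac{\pi-\alpha}{2}$. Because $\alpha\le\frac{\pi}{3}$ this average is at least $\frac{\pi}{3}$, and because $p>\frac{4}{3}$ the identity $\cos^2\theta_c=\frac{p-1}{p}>\frac14$ forces $\theta_c<\frac{\pi}{3}$. Hence the key structural fact is that $x\le y$ with $\frac{x+y}{2}\ge\frac{\pi}{3}\ge\theta_c$.

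The heart of the argument is then the comparison claim: if $x\le y$ lie in $\left[0,\pit\right]$ with $\frac{x+y}{2}\ge\theta_c$, then $G(x)\ge G(y)$. When $\theta_c\le x\le y$ this is immediate from the monotonicity in Lemma \ref{GG}(a); the case $x\le y\le\theta_c$ cannot occur nontrivially, since its midpoint would be $\le\theta_c$. The only substantive case is $x<\theta_c<y$, and this is where I expect the main difficulty, as it requires comparing values of $G$ on opposite sides of its maximum. I would argue by contradiction: if $G(x)<G(y)$, let $x^{*}\in[0,\theta_c]$ be the unique point with $G(x^{*})=G(y)$ (it exists because $G$ increases from $0$ to its maximum on $[0,\theta_c]$). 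Lemma \ref{GG}(d) applied to $x^{*}\le\theta_c\le y$ yields $\frac{x^{*}+y}{2}\le\theta_c$, while $G(x)<G(x^{*})$ together with monotonicity on $[0,\theta_c]$ gives $x<x^{*}$, whence $\frac{x+y}{2}<\frac{x^{*}+y}{2}\le\theta_c$, contradicting $\frac{x+y}{2}\ge\theta_c$.

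Combining these pieces, $R'(\rho)=p\bigl(G(y)-G(x)\bigr)\le 0$ on the entire interval, so $R$ is non-increasing and is minimized at $\rho=\frac{\alpha}{2}$. The only delicate point is the case $x<\theta_c<y$, which is exactly the content of Lemma \ref{GG}(d) (the reflection inequality for $G$ about $\theta_c$); everything else is bookkeeping on the position of $\theta_c$ relative to $\frac{\pi}{3}$, secured by the hypotheses $p>\frac{4}{3}$ and $\alpha\le\frac{\pi}{3}$.
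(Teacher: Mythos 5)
Your proposal is correct, and at its core it runs on the same engine as the paper's proof: the identical rewriting $R'(\rho)=p\bigl(G(\tfrac{\pi}{2}-\rho)-G(\tfrac{\pi}{2}-\alpha+\rho)\bigr)$, the same midpoint computation $\tfrac{\pi}{2}-\tfrac{\alpha}{2}\ge\tfrac{\pi}{3}>\theta_c$ (secured by $p>\tfrac43$ and $\alpha\le\tfrac{\pi}{3}$), and the same key input, Lemma \ref{GG}(d). The difference is in logical packaging. The paper argues only at a hypothetical interior \emph{minimum}: such a point is critical, so $G$ takes equal values at the two arguments, and then Lemma \ref{GG}(a) forces either the midpoint case (not interior) or $\theta_c$ sandwiched between them, which Lemma \ref{GG}(d) rules out. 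You instead establish the sign of $R'$ at \emph{every} point of $[0,\tfrac{\alpha}{2}]$, which requires upgrading the equality-case statement of Lemma \ref{GG}(d) to a one-sided comparison ($x\le y$, $\tfrac{x+y}{2}\ge\theta_c$ implies $G(x)\ge G(y)$); your intermediate-value device with the auxiliary point $x^{*}$ does this correctly, using the strict monotonicity of $G$ on $[0,\theta_c]$ from Lemma \ref{GG}(a). What this buys is a strictly stronger conclusion: $R$ is non-increasing, so the minimum sits at the right endpoint and $\min R = 2\sin^p(\tfrac{\alpha}{2})$ (consistent, since $R(0)/R(\tfrac{\alpha}{2})=2^{p-1}\cos^p(\tfrac{\alpha}{2})\ge 3^{p/2}/2>1$ for $p>\tfrac43$). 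The paper's version is shorter because the downstream applications (Lemmas \ref{lem:eps} and \ref{caseIIa}) only need the minimum over both endpoints, i.e.\ the bound $\min\{\sin^p\alpha,\,2\sin^p(\tfrac{\alpha}{2})\}$, so the extra precision you obtain, while valid, is not exploited there.
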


\begin{proof}
	We differentiate 
	$$R'(\rho)=pG\left(\pit-\rho\right)-pG\left(\pit-\alpha+\rho\right).$$
	If $R$ achieves its minimum value at an interior point $\rho_0$ then $G\left(\pit-\rho_0\right)=G\left(\pit-\alpha+\rho_0\right)$. By Lemma \ref{GG} there are two cases:
	\begin{enumerate}
	    \item[(i)] $\pit-\rho_0=\pit-\alpha+\rho_0$, which is $\rho_0=\frac{\alpha}{2}$, and this is not an internal point. Or,  
	    \item[(ii)]
	$ \pit-\rho_0\ge \theta_c \ge \pit-\alpha+\rho_0.$ 
	\end{enumerate}
	In the second case the average is $\pit-\frac{\alpha}{2}\ge \frac{\pi}{3}$. Again since $\theta_c<\frac{\pi}{3}$, by Lemma \ref{GG}(d) this is a contradiction. So there is no interior point achieving the minimum value, and the lemma is proved.
\end{proof}

A corollary of this is the following:

\begin{lemma}\label{lem:eps}
	Suppose that $\eps_i$, $i=1,2,3$ are acute angles who sum up to $\pit$. Assume that $1\le p\le \frac{\log 3}{\log 2}$. Then
	$$\sum_{i=1}^3\sin^p\eps_i \ge 1.$$
\end{lemma}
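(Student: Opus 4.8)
The plan is to read the inequality as a constrained minimization of $f(\eps_1,\eps_2,\eps_3)=\sum_{i=1}^3\sin^p\eps_i$ over the simplex $\{\eps_i\ge 0,\ \eps_1+\eps_2+\eps_3=\pit\}$ and to show that the minimum value equals $1$, attained (for $p=\frac{\log 3}{\log 2}$) at the symmetric point $\eps_1=\eps_2=\eps_3=\frac{\pi}{6}$, where indeed $3\cdot 2^{-p}=1$. First I would order the angles so that $\eps_3=\max_i\eps_i$; since the three sum to $\pit$ this forces $\eps_3\ge\frac{\pi}{6}$, hence $\alpha:=\eps_1+\eps_2=\pit-\eps_3\le\frac{\pi}{3}$. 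This is the crucial structural gain: it places the pair $(\eps_1,\eps_2)$ exactly in the regime $\alpha\le\frac{\pi}{3}$ covered by Lemma \ref{RR}, and it replaces $\sin^p\eps_3$ by $\cos^p\alpha$.

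Next I would collapse the two free parameters to one by freezing $\eps_3$ and optimizing the split of $\alpha$. Writing $\eps_1=\rho$, $\eps_2=\alpha-\rho$ with $\rho=\min(\eps_1,\eps_2)\in[0,\frac{\alpha}{2}]$, Lemma \ref{RR} says $\sin^p\eps_1+\sin^p\eps_2=R(\rho)$ is minimized at an endpoint, so
\[
\sin^p\eps_1+\sin^p\eps_2\ \ge\ \min\{R(0),R(\tfrac{\alpha}{2})\}=\min\{\sin^p\alpha,\ 2\sin^p(\tfrac{\alpha}{2})\}.
\]
Adding $\sin^p\eps_3=\cos^p\alpha$ leaves only two configurations to beat: the \emph{spread} one $(0,\alpha,\pit-\alpha)$, worth $\cos^p\alpha+\sin^p\alpha$, and the \emph{balanced} one $(\tfrac{\alpha}{2},\tfrac{\alpha}{2},\pit-\alpha)$, worth $\cos^p\alpha+2\sin^p(\tfrac{\alpha}{2})$. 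The first is $\ge 1$ by Lemma \ref{KK} with $\nu=0$, which reads $\cos^p\alpha+\sin^p\alpha\ge 1$; the second is precisely $L(\alpha)-1\ge 1$ by Lemma \ref{LL}. This closes the argument for $\frac{4}{3}\le p\le\frac{\log 3}{\log 2}$, and inspecting the balanced case at $\alpha=\frac{\pi}{3}$ recovers the equality at the symmetric point.

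The main obstacle is that this clean chain uses Lemmas \ref{RR} and \ref{LL}, both of which are valid only for $p\ge\frac{4}{3}$: each hinges on the inequality $\theta_c<\frac{\pi}{3}$, and since $\cos^2\theta_c=\frac{p-1}{p}$ one has $\theta_c<\frac{\pi}{3}\iff p>\frac{4}{3}$. So the residual range $1\le p\le\frac{4}{3}$ must be handled separately, and I expect this to be the delicate part. Here I would argue directly: the inflection point of $\sin^p$, located at $\arctan\sqrt{p-1}$, lies at or below $\frac{\pi}{6}$ exactly when $p\le\frac{4}{3}$, so $\sin^p$ is concave on $[\frac{\pi}{6},\pit]$. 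Combining this concavity with the unimodality of $G$ (Lemma \ref{GG}(a)) and Lemma \ref{GG}(d)—both valid for $1\le p\le 1.73$—one shows any interior critical configuration of $f$ uses at most two distinct angle values, and evaluating those together with the boundary faces $\eps_i=0$ (again controlled by Lemma \ref{KK}) forces $f\ge 1$. I would flag that re-deriving the endpoint-minimality of $R$ and of $L$ in this lower range genuinely requires care, since the sign conditions driving the proofs of Lemmas \ref{RR} and \ref{LL} (for instance $M''(\tfrac14)=(\tfrac12)^{p-3}p(3p-4)$) change at $p=\frac{4}{3}$.
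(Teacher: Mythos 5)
Your chain for $p\ge\frac43$ is exactly the paper's proof: order the angles so that $\alpha:=\eps_1+\eps_2\le\frac{\pi}{3}$, invoke Lemma~\ref{RR} with $\rho=\eps_1$ to reduce to the two endpoint configurations, then beat the spread configuration $\cos^p\alpha+\sin^p\alpha$ and the balanced configuration $\cos^p\alpha+2\sin^p\left(\frac{\alpha}{2}\right)$ separately, the latter via Lemma~\ref{LL}. (For the spread case the paper uses the even simpler bound $\sin^pt\ge\sin^2t$ together with $\sin^2(\eps_1+\eps_2)+\sin^2\eps_3=1$; your appeal to Lemma~\ref{KK} with $\nu=0$ is an acceptable substitute.) You also correctly diagnosed that Lemmas~\ref{RR} and~\ref{LL} are only available for $p>\frac43$, because both rest on $\theta_c<\frac{\pi}{3}$, i.e.\ $\frac{p-1}{p}>\frac14$.

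The genuine gap is your handling of $1\le p<\frac43$. What you give there is a plan, not a proof: the claim that every interior critical configuration uses at most two distinct angle values, and that evaluating those together with the boundary faces forces the bound, is precisely the hard content that Lemmas~\ref{RR} and~\ref{LL} encode in the upper range, and you yourself flag that their endpoint-minimality arguments break at $p=\frac43$ (e.g.\ the sign of $M''\left(\frac14\right)=\left(\frac12\right)^{p-3}p(3p-4)$). The paper eliminates this entire case split with a one-line reduction you missed: since each $\eps_i$ is acute, $0\le\sin\eps_i\le1$, so $p\mapsto\sin^p\eps_i$ is nonincreasing, and therefore $\sum_{i}\sin^p\eps_i\ge\sum_i\sin^{p_0}\eps_i$ with $p_0=\frac{\log3}{\log2}\approx1.585$. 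It thus suffices to prove the lemma at the single exponent $p=p_0$, which sits comfortably inside $\left(\frac43,1.73\right]$ where Lemmas~\ref{RR} and~\ref{LL} both apply, and where equality at $\eps_1=\eps_2=\eps_3=\frac{\pi}{6}$ gives $3\cdot2^{-p_0}=1$. With that observation your argument closes with no residual range; without it, your proposal is incomplete on $\left[1,\frac43\right)$.
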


\begin{proof}
	It is enough to assume $p=\frac{\log 3}{\log 2}$. We may reorder so that $\epsilon_1\le \epsilon_2\le \epsilon_3$. Then 
	$\epsilon_1\le (\epsilon_1+\epsilon_2)/2\le \frac{\pi}{6}$ and invoking Lemma \ref{RR} with $\rho=\epsilon_1$ and $\alpha=\epsilon_1+\epsilon_2$, we see that $$\sin^p\epsilon_1+\sin^p\epsilon_2\ge \min\{2\sin^p\left(\frac{\epsilon_1+\epsilon_2}{2}\right),\sin^p(\epsilon_1+\epsilon_2)\}.$$ 
	If $\sin^p(\epsilon_1+\epsilon_2)\le 2\sin^p\left(\frac{\epsilon_1+\epsilon_2}{2}\right)$ then
	$$\sum_i \sin^p\eps_i\ge \sin^p(\epsilon_1+\epsilon_2)+\sin^p\epsilon_3\ge \sin^2(\epsilon_1+\epsilon_2)+\sin^2\epsilon_3=1,$$ and we are done. Otherwise
	$$\sum_i \sin^p\eps_i\ge 2\sin^p\left(\frac{\epsilon_1+\epsilon_2}{2}\right)+\sin^p\eps_3=2\sin^p\left(\frac{\epsilon_1+\epsilon_2}{2}\right)+\cos^p(\eps_1+\eps_2)\ge 1,$$
	where the last inequality follows from Lemma \ref{LL}.
\end{proof}

We are now ready to prove Theorem \ref{thm:main}.

\begin{proof}[Proof of Theorem \ref{thm:main}] 
Let $X(\theta_1,\ldots,\theta_N)=\{x_1,\ldots,x_N\}$ be a global minimizer for $\fp_{p,N,2}$. Given the invariance of $\fp_{p, N,2}$ under rotations, permutations and phasing as mentioned in the introduction, we may assume without any loss of generality that  all $\theta_j\in [0,\pi)$ and view their  differences as real numbers modulo $\pi$. We will first prove $\Fc_{p,N,2}=\fp_{p,N,2}(\Xperp{N})= (N-1)^2/2$ by induction on odd $N$. We will address the uniqueness of the optimal configuration in the end.

 To set up the basis of the induction, we note that the result is trivial for $N=1$ and known for $N=3$, see \cite{EhlOko2012}. Thus we assume from now on that $N\ge 5$, and proceed to the induction step.

There are two cases to consider.

\noindent {\bf Case I:} $|\theta_i-\theta_j \mod\frac{\pi}{2}|\ge \frac{\pi}{6}$ for any $i\neq j$.


This is the easy case. Recall $W,V$ as defined in \eqref{equ:W} and \eqref{equ:V}. We know then that  $(\theta_i-\theta_j)\mod \pi\le \theta_p$ or $\ge\pi-\theta_p$ and hence  $W_p(\theta_i-\theta_j)\ge V(\theta_i-\theta_j)$ for all $i,j$. By Proposition \ref{prop:54} the potential function $F_V$ has global minimal value $(N-1)^2/2$, so this value is a lower bound on $\fp_{p,N,2}$. On the other hand it is achieved by $X_N^\perp$. We conclude that $F_V(\Theta)=F_{W_p}(\Theta)=(N-1)^2/2$. But necessarily $V(\theta_i-\theta_j)=W_p(\theta_i-\theta_j)$ for all $i,j$. This implies that $\theta_i-\theta_j\mod \pi =0,\theta_p,\pi-\theta_p$ for all $i,j$. After translation we assume w.l.o.g that $\Theta=(0,\ldots,0,\theta_p,\ldots,\theta_p, \pi-\theta_p,\ldots,\pi-\theta_p)$. For $p<\frac{\log 3}{\log 2}$ we have $\theta_p>\frac{\pi}{3}$, which contradicts our assumption that $|\theta_i-\theta_j \mod\frac{\pi}{2}|\ge \frac{\pi}{6}$, Unless $\Theta=(0,\ldots,0)$. But this is clearly not a minimizer. It remains for Case I to consider the scenario $p=\frac{\log 3}{\log 2}$ and $\theta_p=\frac{\pi}{3}$. Then $\Theta$ has $a$ 0's, $b$ angles $\frac{\pi}{3}$ and $c$ angles $\frac{2\pi}{3}$, $a+b+c=N$. We compute
$F_{W_p}(\Theta)=F_V(\Theta)=a(a-1)+b(b-1)+c(c-1)+2(ab+ac+bc)/3$. As a function of the real variables $a,b,c$, it is minimized when $a=b=c=N/3$, so $F_V(\Theta)\ge 5N^2/9-N>(N-1)^2/2$. Hence this scenario as well cannot occur.\\

\noindent{\bf Case II:} $|\theta_i-\theta_j \mod\frac{\pi}{2}|< \frac{\pi}{6}$ for some $i\neq j$.

With relabeling, rotation, and reflection, we assume $\theta_1=0$ and the angle $|\theta_2-\theta_1|$ is the closest to $\frac{\pi}{2}$. Let $\theta_2-\theta_1=\rho+\pit$ for some $0<\rho<\frac{\pi}{6}$. In order to make sure  the angle $|\theta_2-\theta_1|$ is the closest to $\frac{\pi}{2}$,  we must have $\{\theta_3, \cdots, \theta_N\}\subseteq [2\rho, \pit-\rho]\cup[\theta_2,\pi]$. Let the range $T_1:=[\theta_2,\pi]$ be of Type I angle, and the range $T_2:=[2\rho, \pit-\rho]$ be of Type II angle. See Figure \ref{fig:angles}.

 Notice that if we had $\rho>\frac{\pi}{6}$, there would be no room for Type II angles.

\begin{figure}[hbt]
\caption{Type I and Type II angles in Case II. The angles $\theta$ and $\alpha$ are used in the proof of Lemma \ref{caseIIa}}\label{fig:angles}
\begin{tikzpicture}
\draw[pattern=dots,pattern color=red,draw=red!0] (0,0)--({3.7*cos(40)},{3.7*sin(40)})--({3.7*cos(40)},{3.7*sin(40)}) arc (40:70:3.7)--({3.7*cos(70)},{3.7*sin(70)})--(0,0);
\draw[pattern=dots,pattern color=blue,draw=red!0] (0,0)--({3.7*cos(110)},{3.7*sin(110)})--({3.7*cos(110)},{3.7*sin(110)}) arc (110:180:3.7)--({3.7*cos(180)},{3.7*sin(180)})--(0,0);
\draw[](-4,0)--(4,0);
\draw[](0,0)--(0,4);
\draw[ultra thick, -latex](0,0)--(3.7,0) node[below]{$x_1$};
\draw[ultra thick, -latex](0,0)--({3.7*cos(110)},{3.7*sin(110)}) node[above]{$x_2$};
\draw[thick, -latex,blue](0,0)--({3.7*cos(160)},{3.7*sin(160)}) node[left]{$x_i$};
\draw[ thick, -latex,red](0,0)--({3.7*cos(50)},{3.7*sin(50)}) node[right]{$x_j$};
\draw[red] ({1*cos(0)},{1*sin(0)}) arc (0:50:1);
\draw[red] (1.1,0.5) node{$\alpha$};
\draw[blue] ({1*cos(110)},{1*sin(110)}) arc (110:160:1);
\draw[blue] (-0.71,0.96) node{$\theta$};
\draw[blue](-2,1.65) node{Type I};
\draw[red](1.7,2.3) node{Type II};
\draw (1.5,0) arc (0:40:1.5);
\draw (1.7,0.6) node{$2\rho$};
\draw ({1.5*cos(70)},{1.5*sin(70)}) arc (70:90:1.5);
\draw (0.32,1.65) node{$\rho$};
\draw ({1.5*cos(90)},{1.5*sin(90)}) arc (90:110:1.5);
\draw (-0.30,1.65) node{$\rho$};

\end{tikzpicture}
\end{figure}
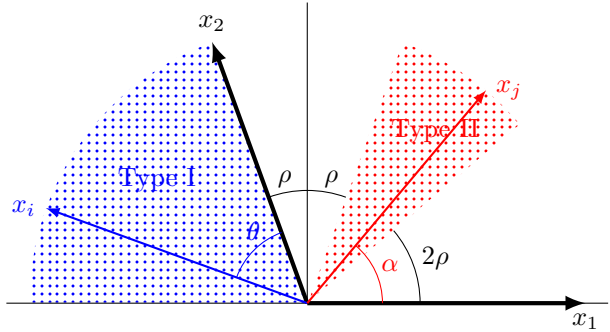

We will prove the existence of ${i_0},{j_0}$  such that 
\begin{equation}\label{diff}
    \sum_{i\neq i_0, j_0}^N \left(|\cos(\theta_i-\theta_{i_0})|^p+|\cos(\theta_i-\theta_{j_0})|^p\right) \ge (N-2).
\end{equation}

By the induction hypothesis, we  have $\fp_{p,N-2,2}(X\backslash\{x_{i_0},x_{j_0}\})\ge (N-3)^2/2$. If equation \eqref{diff} holds, then we will have
\begin{align*}
    \fp_{p,N,2}(X)&\ge 2(N-2)+\fp_{p,N-2,2}(X\backslash\{x_{i_0},x_{j_0}\})+2|\cos(\theta_{i_0}-\theta_{j_0})|^p\\
    & \ge 2(N-2)+\frac{(N-3)^2}{2}=\frac{(N-1)^2}{2}.
\end{align*}

It remains to prove \eqref{diff}.
In \cite{GP20} it was argued that for $p=1.3$, 
$$|\cos(\theta-\theta_{i_0})|^{1.3}+|\cos(\theta-\theta_{j_0})|^{1.3}\ge 1,\quad\text{for any }\theta\in T_1\cup T_2,$$  which was sufficient to make the induction step. For larger $p$ this is no longer true for $\theta\in T_2$, but the average over $N-2$ angles is still bigger than 1, i.e. \eqref{diff} still holds. We divide the discussion into two subcases.\\


\noindent{\bf Case IIa:} Among $\{\theta_3, \cdots, \theta_N\}$, the number of Type I angles is at least as large as the number of Type II angles.  In this case, we will prove \eqref{diff} holds with $i_0=1, j_0=2$.

Define
$$\Delta_p(\theta):=|\cos(\theta_i-\theta_2)|^{p}+|\cos(\theta_i-\theta_1)|^{p}.$$

We will prove 

\begin{lemma}\label{caseIIa}
	\begin{itemize}
		\item[]
		\item[(a)] For all $p\le 2$ and $\theta\in T_1$, $\Delta_p(\theta)\ge 1$.
		\item[(b)] For all $p\le \frac{\log(3)}{\log(2)}$, and all $\theta_i\in T_1, \theta_j\in T_2$, $$\Delta_p(\theta_i)+\Delta_p(\theta_j)\ge 2.$$ 
	\end{itemize}
\end{lemma}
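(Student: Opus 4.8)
The plan is to translate the geometry of Case IIa into one-variable trigonometric inequalities and then invoke the calculus lemmas already established. For a Type~I vector at angle $\theta\in T_1=[\theta_2,\pi]$ (where $\theta_2=\pit+\rho$), I would substitute $s:=\theta-\theta_2\in[0,\pit-\rho]$, so that $|\cos(\theta-\theta_2)|=\cos s$ and $|\cos(\theta-\theta_1)|=|\cos(\pit+\rho+s)|=\sin(\rho+s)$, giving $\Delta_p(\theta)=\cos^p s+\sin^p(\rho+s)$. For a Type~II vector at angle $\alpha\in T_2=[2\rho,\pit-\rho]$ the same bookkeeping yields $\Delta_p(\alpha)=\cos^p\alpha+\sin^p(\alpha-\rho)$. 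Part~(a) is then the easy half: since $t^p\ge t^2$ for $t\in[0,1]$ whenever $p\le 2$, it suffices to prove the $p=2$ statement $\cos^2 s+\sin^2(\rho+s)\ge 1$, and this holds because $\sin$ is increasing on $[0,\pit]$ and $\rho+s\ge s$ there, so $\sin^2(\rho+s)\ge\sin^2 s=1-\cos^2 s$.

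For part~(b) the key idea is to bound the Type~I and Type~II contributions separately by quantities that straddle $1$ and cancel. For the Type~II term I would observe that the three angles $\pit-\alpha$, $\alpha-\rho$, and $\rho$ are each acute and sum exactly to $\pit$; since $\cos\alpha=\sin(\pit-\alpha)$, Lemma~\ref{lem:eps} applies directly and gives $\sin^p(\pit-\alpha)+\sin^p(\alpha-\rho)+\sin^p\rho\ge 1$, that is, $\Delta_p(\theta_j)\ge 1-\sin^p\rho$. For the Type~I term I would rewrite $\sin(\rho+s)=\cos(\pit-\rho-s)$, so that $\Delta_p(\theta_i)=\cos^p s+\cos^p(\pit-\rho-s)$ is exactly the function $K(s)$ of Lemma~\ref{KK} with $\nu=\rho$; that lemma then yields $\Delta_p(\theta_i)\ge 1+\sin^p\rho$.

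Adding the two bounds, the $\pm\sin^p\rho$ terms cancel and produce $\Delta_p(\theta_i)+\Delta_p(\theta_j)\ge(1+\sin^p\rho)+(1-\sin^p\rho)=2$, which is the claim. I expect the main obstacle to be conceptual rather than computational: the whole argument hinges on discovering the correct decompositions. The Type~II piece must be recognized as a "complete the triple to $\pit$" configuration so that Lemma~\ref{lem:eps} can be invoked, and the Type~II deficit below $1$ is \emph{precisely} $\sin^p\rho$; simultaneously the Type~I piece must be matched to $K$ with the specific choice $\nu=\rho$, whose surplus above $1$ is \emph{precisely} $\sin^p\rho$. One can sanity-check the tightness at $\rho=\pit/3$, where $T_2$ degenerates to the single angle $\alpha=\pi/3$ and $\Delta_p(\pi/3)=2^{1-p}=\tfrac23=1-\sin^p\rho$ at $p=\tfrac{\log3}{\log2}$, confirming both bounds are sharp and that the cancellation is not accidental.

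Finally, regarding the range of $p$: the conclusion of Lemma~\ref{lem:eps} is monotone decreasing in $p$ (each $\sin\eps_i\in[0,1)$), so the Type~II bound holds for all $p\le\tfrac{\log3}{\log2}$; the Type~I bound via Lemma~\ref{KK} is stated for $1\le p$, which is exactly the regime in which the Type~II contribution can actually drop below $1$. For $p\le 1.3$ the stronger pointwise estimate $\Delta_p\ge 1$ on all of $T_1\cup T_2$ already holds (as in \cite{GP20}), making part~(b) immediate there, so the refined cancellation argument is only needed on $1.3<p\le\tfrac{\log3}{\log2}$, where all cited lemmas are in force.
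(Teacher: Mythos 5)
Your proof is correct, and it takes a genuinely shorter route through the second half of part (b) than the paper does. For part (a) the paper simply cites Lemma~\ref{KK}; your elementary argument ($t^p\ge t^2$ on $[0,1]$ plus monotonicity of $\sin$ on $[0,\pit]$) is equally valid and in fact covers $0<p<1$ directly, where Lemma~\ref{KK} as stated ($1\le p\le 2$) does not literally apply. For part (b), you and the paper begin identically: apply Lemma~\ref{KK} to the Type~I pair to extract the lower bound $1+\sin^p\rho$, leaving the inequality $\sin^p\rho+\cos^p\alpha+\sin^p(\alpha-\rho)\ge 1$ to be proved. At this point the paper fixes $\alpha$, exploits the symmetry $\alpha\mapsto\pit+\rho-\alpha$ to reduce to $\alpha\le\frac{\pi}{3}$, minimizes over $\rho$ with Lemma~\ref{RR}, and settles the two endpoint cases by $\cos^p\alpha+\sin^p\alpha\ge\cos^2\alpha+\sin^2\alpha=1$ and by Lemma~\ref{LL}. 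You instead recognize that the remaining inequality is verbatim Lemma~\ref{lem:eps} applied to the acute triple $\rho$, $\alpha-\rho$, $\pit-\alpha$, which sums to $\pit$ (via $\cos\alpha=\sin(\pit-\alpha)$); since Lemma~\ref{lem:eps} is proved in the paper from Lemmas~\ref{RR} and~\ref{LL} independently of the present lemma (the paper deploys it only in Case~IIb.1), there is no circularity. In effect you have observed that the paper's inline argument in Case~IIa duplicates the proof of Lemma~\ref{lem:eps}, and your explicit $\pm\sin^p\rho$ cancellation makes the structure transparent; your tightness check at $\rho=\frac{\pi}{6}$, $p=\frac{\log 3}{\log 2}$, where both bounds equal $\frac{2}{3}$ and $\frac{4}{3}$ respectively, is also correct. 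One small simplification to your range bookkeeping at the end: since each summand $|\cos(\cdot)|^p$ is non-increasing in $p$ for a fixed configuration, both parts of the lemma for all smaller $p$ follow from the single worst case ($p=2$ in part (a), $p=\frac{\log 3}{\log 2}$ in part (b)), so the separate appeal to \cite{GP20} for $p\le 1.3$ is unnecessary.
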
 
By using this lemma we can pair each angle of Type II with an angle of Type I, and by part (b) this pair contributes to the potential function at least 2. The remaining Type I vectors contribute by part (a) at least 1. Thus in \emph{average}, $\Delta_p(\theta_i)$ is at least $1$, which is what we need to conclude equation \eqref{diff} and the induction step for Case IIa. To prove the uniqueness assertion notice that we must have equality in \eqref{diff}, and hence equality in parts (a) and (b) of the lemma. The uniqueness for $N-2$ and the equality statements in the lemma show that the only minimizer is $X_N^\perp$.  

We now turn to the proof of this lemma.

\begin{proof}[Proof of Lemma \ref{caseIIa}]

	Part (a) is a direct consequence of Lemma \ref{KK}. We proceed to prove (b). Let $\alpha$ be the angle $\theta_j-\theta_1=\theta_j$ and let $\theta=\theta_i-\theta_2$. We have $2\rho \le \alpha\le \pit-\rho$, $0\le \theta\le \pit-\rho$. See Figure \ref{fig:angles}. It can be computed that
	$$\Delta_p(\theta_i)+\Delta_p(\theta_j)=\cos^p\theta+\cos^p\left( \pit-\rho-\theta \right)+\cos^p\alpha+\cos^p\left( \pit+\rho-\alpha\right)
	$$
	Minimizing over $\theta\in \left[0,\pit-\rho\right]$, Lemma \ref{KK} implies that
	$$\Delta_p(\theta_i)+\Delta_p(\theta_j)\ge 1+\sin^p\rho+\cos^p\alpha+\sin^p(\alpha-\rho).$$
	Now we fix $\alpha$ and minimize over $\rho$. This function is symmetric to the change of variables $\alpha\to \pit+\rho-\alpha$. By making this change, if necessary, we may assume that $\alpha\le \frac{\pi}{3}$. Notice that in both cases $0\le \rho \le \frac{\alpha}{2}$. By Lemma \ref{RR}, we now have
	
	$$\Delta_p(\theta_i)+\Delta_p(\theta_j)\ge 1+\cos^p\alpha+\min\left(\sin^p\alpha,2\sin^p\left(\frac{\alpha}{2}\right)\right).$$
	
	Now, we have 
	$$1+\cos^p\alpha+\sin^p\alpha\ge 1+\cos^2\alpha+\sin^2\alpha= 2,$$
	and by Lemma \ref{LL}
	$$1+\cos^p\alpha+2\sin^p\left(\frac{\alpha}{2}\right)\ge 2.$$
	This completes the proof of the inequality (b).\\
\end{proof}

	\noindent{\bf Case IIb:}
	Among $\{\theta_3, \cdots, \theta_N\}$, the number of Type II angles is larger than the number of Type I angles. 
	
	In this case we will take one of the angles of Type II, say $\theta_3$, which is the closest to $\theta_1=0$. We have $\theta_3=2\rho+\alpha$ for some $\alpha\ge 0$. Note that $\alpha\le \pit-3\rho$. In this case, we will prove \eqref{diff} holds with $\{i_0,j_0\}=\{2,3\}$.
	
	Every other angle of Type II, $\theta_i$, $i\neq 1,2,3$ is of the form $\theta_i=2\rho+\alpha+\beta$ for $0\le \beta\le \pit-3\rho-\alpha$. We will consider $\theta_1=0$ as an angle of Type I. 
	As before, it will suffice to show that the average value of
	$$E_p(\theta)=|\cos(\theta-\theta_2)|^p+|\cos(\theta-\theta_3)|^p$$
	over the multiset of $\theta=\theta_i$, $i\neq 2,3$ is at least 1. With this in mind we have
	
	\begin{lemma}\label{lem:type2}
		Given $1\le p\le 2$, every angle $\theta_i$, $i\neq 2,3$ of Type II satisfies $E_p(\theta)\ge 1$.
	\end{lemma}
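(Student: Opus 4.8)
The plan is to reduce the claim directly to Lemma~\ref{KK}, with no new calculus required. The key observation is that the two arc-lengths from $\theta_i$ toward $\theta_2$ and toward $\theta_3$ add up to a quantity that does not depend on which Type~II angle $\theta_i$ we picked: it is simply $\theta_2-\theta_3$. Since $\theta_3=2\rho+\alpha$ is chosen to be the Type~II angle closest to $\theta_1=0$, every Type~II angle satisfies $\theta_3\le \theta_i\le \pit-\rho<\pit+\rho=\theta_2$. I would therefore set $\delta:=\theta_i-\theta_3\ge 0$ and $\gamma:=\theta_2-\theta_i>0$, both lying in $[0,\pit)$, so that $|\cos(\theta_i-\theta_3)|=\cos\delta$ and $|\cos(\theta_i-\theta_2)|=\cos\gamma$, and consequently $E_p(\theta_i)=\cos^p\gamma+\cos^p\delta$.

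Now the decomposition $\gamma+\delta=(\theta_2-\theta_i)+(\theta_i-\theta_3)=\theta_2-\theta_3=\pit-\rho-\alpha$ holds independently of $\theta_i$. First I would record that this puts us in the shape $\gamma+\delta=\pit-\nu$ with $\nu:=\rho+\alpha$, and that $0\le \nu\le \pit$ follows from $\rho<\frac{\pi}{6}$ together with $\alpha\le \pit-3\rho$. Writing $E_p(\theta_i)=\cos^p\delta+\cos^p(\pit-\nu-\delta)$ with $\delta\in[0,\pit-\nu]$, this is exactly the function $K$ of Lemma~\ref{KK} evaluated at $\theta=\delta$. Applying that lemma yields $E_p(\theta_i)\ge 1+\sin^p\nu\ge 1$, which is the asserted bound.

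I expect the only delicate point to be bookkeeping rather than analysis. One must check that $\delta$ genuinely lies in the admissible interval $[0,\pit-\nu]$ on which Lemma~\ref{KK} applies; here $\delta\le \pit-3\rho-\alpha\le \pit-\rho-\alpha=\pit-\nu$, so this is fine, and one must verify the signs of the raw differences $\theta_i-\theta_2$ and $\theta_i-\theta_3$ so that the absolute values collapse to cosines of acute angles. Once the arc-decomposition $\gamma+\delta=\theta_2-\theta_3$ is identified, everything reduces to Lemma~\ref{KK}, so the substantive work is confined to verifying these range constraints. Note that the hypothesis $1\le p\le 2$ is inherited directly from Lemma~\ref{KK} and nothing stronger is needed.
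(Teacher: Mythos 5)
Your proof is correct and is essentially the paper's own argument: with $\delta=\theta_i-\theta_3=\beta$ and $\nu=\rho+\alpha$, your identity $\gamma+\delta=\theta_2-\theta_3=\pit-\nu$ is exactly the substitution the paper uses to write $E_p(\theta_i)=K(\delta)$ and invoke Lemma~\ref{KK}, giving $E_p(\theta_i)\ge 1+\sin^p\nu\ge 1$. Your explicit range checks ($\delta\in[0,\pit-\nu]$, $\nu\le\pit$, and the positivity of the cosines) are the bookkeeping the paper leaves implicit, and they all verify.
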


	\begin{proof}[Proof of Lemma \ref{lem:type2}]
		This is just an application of Lemma \ref{KK} with $\theta=\beta$ and $\nu=\rho+\alpha\le \pit$. 
	\end{proof}

	In view of this, and since the number of Type II angles is at least as large as the number of Type I angles (including $\theta_1$), it suffices to show that for every $\theta_i$ of Type II and every $\theta_j$ of Type I, we will have
	$$E_p(\theta_i)+E_p(\theta_j)\ge 2.$$ The remainder of this proof is devoted to proving this assertion. Let 
	$$ \theta_i=2\rho+\alpha+\beta, \ \ \text{and } \ \ \theta_j=\pit+\rho+\theta.$$
	We have $0\le\beta\le \pit-3\rho-\alpha$ and $0\le \theta\le \pit-\rho$. Then
	$$E_p(\theta_i)+E_p(\theta_j) =\cos^p\beta+\cos^p\left(\pit-\rho-\alpha-\beta\right)+\cos^p\theta+\left| \cos\left( \pit+\alpha+\rho-\theta\right)\right|^p.$$ We now make use of Lemma \ref{KK} to minimize over $\beta$ and conclude that 
	$$E_p(\theta_i)+E_p(\theta_j)\ge 1+\sin^p(\alpha+\rho)+\cos^p\theta+\left| \cos\left( \pit+\alpha+\rho-\theta\right)\right|^p.$$ Write $\alpha+\rho=\gamma$. Then we rewrite
	\begin{equation}\label{eqIIb}
		E_p(\theta_i)+E_p(\theta_j)\ge 1+\sin^p(\gamma)+\cos^p\theta+\left| \cos\left( \pit+\gamma-\theta\right)\right|^p,
	\end{equation} and we shall consider two cases: Case IIb.1 with $\gamma\le \theta$ and Case IIb.2 with $\gamma>\theta$.\\
	
	In Case IIb.1, noting that $\theta-\gamma\le \pit$ we can drop the absolute value sign in Equation \eqref{eqIIb}. Define $\delta=\theta-\gamma$. Then \eqref{eqIIb} can be rewritten as
	\begin{equation}
		E_p(\theta_i)+E_p(\theta_j)\ge 1+\sin^p\gamma+\sin^p\delta+\sin^p\left(\pit-\gamma-\delta\right),
	\end{equation} 
	but note that the three angles $\gamma,\delta$ and $\pit-\gamma-\delta$ are acute and sum up to $\pit$. Now Lemma \ref{lem:eps} tells us that $E_p(\theta_i)+E_p(\theta_j)\ge 2$. So Case IIb.1 is settled.\\
	
	We turn to Case IIb.2, where $\theta<\gamma$. Then \eqref{eqIIb} is written as 
	$$ E_p(\theta_i)+E_p(\theta_j)\ge 1+\sin^p\gamma+\cos^p\theta+\sin^p(\gamma-\theta).$$ But by Lemma \ref{KK}, using that $\theta$ and $\pit-\gamma$ are acute and their sum $\theta+\pit-\gamma<\pit$, we learn that 
	$\sin^p\gamma+\cos^p\theta=\cos^p\left(\pit-\gamma\right)+\cos^p\theta \ge 1$, which proves $E_p(\theta_i)+E_p(\theta_j)\ge 2$. This completes the proof of Case IIb.2, and thus completes the proof of the induction step. We have shown that the absolute minimum value of $ \fp_{p,N,2}(X)$ is $(N-1)^2/2$.\\
	
	It remains to address the question of uniqueness. This will also be proved by induction on $N$. For $N=1$ the assertion is trivial. For the induction step, suppose that $(\theta_1,\ldots,\theta_N)$ is a global minimizer. Then we choose $\theta_1,\theta_2$ as above, and we know that inequality \eqref{diff} is satisfied. But a posteriori, it must be an equality, $(\theta_3,\ldots,\theta_N)$ must be a minimizer for $\fp_{p,N-2,2}$, and $\theta_1,\theta_2$ must be orthogonal. If $N=3$ then by Lemma \ref{KK}  $\theta_3=\theta_1,\theta_2$ and we are done. If $N>3$ we know by the induction hypothesis that $(\exp(i\theta_3),\ldots,\exp(i\theta_N))=X_{N-2}^\perp$. We may remove any other two perpendicular vectors, and conclude similarly that the remaining configuration is an $X_{N-2}^\perp$. This implies that any two vectors in the configuration are either equal or perpendicular, and thus it must be an $X_N^\perp$. The proof is complete. 
\end{proof}

\begin{remark}
Theorem \ref{thm:main} is not sharp as suggested by the numerical results in Section \ref{sec3}. The result is restricted by our proof techniques. For $N=5$, we believe the result holds for $p$ up to $\approx1.77$. 
\end{remark}

\section{Asymptotic behaviour of the minimizers and Numerical results}\label{sec3}
This section is divided into two parts. First, we look at the asymptotic behaviour of the minimizers of~\eqref{equ:min}, and then present some numerical results of the solution to our problem when $N\in \{5, 6, 7\}$.

\subsection{Asymptotic behaviour of the minimizers}
As mentioned above,  for even integers $N=2k$ and $p\in (0, 2)$, the solution to~\eqref{equ:min} is a configuration that consists of $k$ copies of any orthonormal basis of $\R^2.$ However, as shown in the last section, when $N=2k+1$ the minimizers of the $p^{th}$ frame potentials for $p\in (0,2)$ are more difficult to classify. Nonetheless, in this section we prove that as $N\to \infty,$ these minimizers approach copies of an orthonormal basis in $\R^2$. To establish this result, we first need to introduce a continuous analog of the frame potential that which is interesting in its own right.

Given a probabilistic measure $\mu$ on the unit sphere in $\R^d$, the \emph{probabilistic $p$ frame potential} is defined as
\begin{equation}
\pfp_{p,d}(\mu):=\int_{\Sb^{d-1}}\int_{\Sb^{d-1}}|\langle x,y\rangle|^pd\mu(x)d\mu(y).
\end{equation}

Let $\mathcal{M}(\Sb^{d-1})$ be the collection of all probabilistic measures on the sphere. The relationship between the discrete problem \eqref{equ:min} and the continuous one
\begin{equation}\label{equ:pfp}
\Pc_{p,d}:=\min_{\mu\in\mathcal{M}(\Sb^{d-1})}\pfp_{p,d}(\mu)
\end{equation}
was explored in \cite{CGGKO}. We also encourage the readers to see a general version with a general potential kernel in \cite[Theorem 4.2.2]{MEbook}.

\begin{proposition}[{\cite[Proposition 2.6]{CGGKO}}]\label{prop:asymp}
Given $d\geq 2$ and  $p\in(0,\infty)$. For every $N\geq2$, let $X_N$ be an $N$-point optimal configuration of  \eqref{equ:min}, then every weak star cluster point $\nu^*$ of the normalized counting measure $\nu_{X_N}=\frac{1}{N}\sum_{x\in X_N}\delta_x$ solves \eqref{equ:pfp}, that is $\pfp_{p,d}(\nu^*)=\Pc_{p,d}$. 
\end{proposition}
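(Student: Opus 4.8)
The plan is to pass between the discrete functional $\fp_{p,N,d}$ and the probabilistic functional $\pfp_{p,d}$ through the empirical measure, and then to sandwich $\pfp_{p,d}(\nu^*)$ between $\Pc_{p,d}$ from below and from above. The starting point is the exact identity, valid for any $N$-point configuration $X_N=\{x_k\}_{k=1}^N$,
\[
\pfp_{p,d}(\nu_{X_N})=\frac{1}{N^2}\sum_{k,\ell=1}^N|\langle x_k,x_\ell\rangle|^p=\frac{\fp_{p,N,d}(X_N)}{N^2}+\frac1N,
\]
where the last term collects the $N$ diagonal contributions $|\langle x_k,x_k\rangle|^p=1$. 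Since $X_N$ is optimal, $\fp_{p,N,d}(X_N)=\Fc_{p,N,d}$, so the left-hand side equals $\Fc_{p,N,d}/N^2+1/N$. A cluster point $\nu^*$ arises as a weak-star limit $\nu_{X_{N_j}}\to\nu^*$ along a subsequence with $N_j\to\infty$, so the term $1/N_j$ will vanish in the limit and the whole argument reduces to controlling $\Fc_{p,N_j,d}/N_j^2$.

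First I would establish that $\pfp_{p,d}$ is weak-star \emph{continuous} on $\mathcal{M}(\Sb^{d-1})$. The kernel $K(x,y)=|\langle x,y\rangle|^p$ is continuous and bounded (by $1$) on the compact space $\Sb^{d-1}\times\Sb^{d-1}$ for every $p>0$. Weak-star convergence $\mu_n\to\mu$ of probability measures implies weak-star convergence of the product measures $\mu_n\otimes\mu_n\to\mu\otimes\mu$: this holds on product functions $g(x)h(y)$ by multiplicativity of the integral, and extends to all of $C(\Sb^{d-1}\times\Sb^{d-1})$ by Stone--Weierstrass together with the uniform mass bound. Applying this to $K$ gives $\pfp_{p,d}(\mu_n)\to\pfp_{p,d}(\mu)$, and in particular $\pfp_{p,d}(\nu_{X_{N_j}})\to\pfp_{p,d}(\nu^*)$. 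Combined with the identity this yields
\[
\pfp_{p,d}(\nu^*)=\lim_{j\to\infty}\left(\frac{\Fc_{p,N_j,d}}{N_j^2}+\frac{1}{N_j}\right)=\lim_{j\to\infty}\frac{\Fc_{p,N_j,d}}{N_j^2}.
\]
Since $\Pc_{p,d}$ is the minimum of $\pfp_{p,d}$ and $\nu^*\in\mathcal{M}(\Sb^{d-1})$ (weak-star limits of probability measures are probability measures, as the constant $1$ is continuous), the lower bound $\pfp_{p,d}(\nu^*)\ge\Pc_{p,d}$ is immediate.

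The substance is the matching upper bound $\limsup_{N\to\infty}\Fc_{p,N,d}/N^2\le\Pc_{p,d}$, which I would obtain by a first-moment (random sampling) argument. Let $\mu^*$ be a minimizer of $\pfp_{p,d}$, whose existence follows from the weak-star continuity just proved together with the weak-star compactness of $\mathcal{M}(\Sb^{d-1})$. Draw $N$ i.i.d.\ points $Y_1,\dots,Y_N$ from $\mu^*$. For $k\neq\ell$ the points are independent, so $\mathbb{E}\,|\langle Y_k,Y_\ell\rangle|^p=\Pc_{p,d}$, and summing over the $N(N-1)$ off-diagonal pairs gives $\mathbb{E}\,\fp_{p,N,d}(\{Y_k\})=N(N-1)\Pc_{p,d}$. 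Hence some realization achieves at most the mean, so $\Fc_{p,N,d}\le N(N-1)\Pc_{p,d}$, giving $\Fc_{p,N,d}/N^2\le(1-1/N)\Pc_{p,d}$. Passing to the limit along $N_j$ yields $\pfp_{p,d}(\nu^*)\le\Pc_{p,d}$, and together with the lower bound we conclude $\pfp_{p,d}(\nu^*)=\Pc_{p,d}$.

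I expect the only genuinely delicate point to be the weak-star continuity of $\mu\mapsto\pfp_{p,d}(\mu)$, i.e.\ the passage $\mu_n\otimes\mu_n\to\mu\otimes\mu$; everything else is bookkeeping. Continuity (not merely lower semicontinuity) is what lets me equate the limit of the discrete potentials with $\pfp_{p,d}(\nu^*)$ rather than only bound it, and it is available here precisely because $K$ is continuous and bounded, a feature special to $p>0$. As an alternative to random sampling one could use any deterministic quantization of $\mu^*$ whose empirical measures converge weak-star to $\mu^*$, but the first-moment argument is the cleanest route to the upper bound.
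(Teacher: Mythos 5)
Your proof is correct, and there is an important caveat about the comparison: the paper contains no proof of this proposition at all --- it is imported verbatim from \cite[Proposition 2.6]{CGGKO} (with a more general kernel version pointed to in \cite[Theorem 4.2.2]{MEbook}), so the relevant benchmark is the cited literature, not an in-paper argument. Measured against that, your route is the standard discrete-energy argument and essentially the one used in such sources: the exact identity $\pfp_{p,d}(\nu_{X_N})=\Fc_{p,N,d}/N^2+1/N$, weak-star continuity of $\mu\mapsto\pfp_{p,d}(\mu)$, and the upper bound $\Fc_{p,N,d}\le N(N-1)\Pc_{p,d}$. Your first-moment sampling step is exactly the classical deterministic step in probabilistic clothing: integrating $\fp_{p,N,d}(x_1,\dots,x_N)$ against the $N$-fold product of a minimizing measure $\mu^*$ gives $N(N-1)\Pc_{p,d}$, so some configuration does at least as well as the average. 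Some expositions instead prove a Fekete-type monotonicity of $N\mapsto\Fc_{p,N,d}/(N(N-1))$ before passing to the limit; your sandwich shows that is unnecessary for the cluster-point statement. You also correctly isolate the one genuinely kernel-dependent ingredient: full weak-star continuity (via $\mu_n\otimes\mu_n\to\mu\otimes\mu$, by Stone--Weierstrass on $C(\Sb^{d-1}\times\Sb^{d-1})$) is available here because $|\langle x,y\rangle|^p$ is continuous and bounded for $p>0$, whereas for unbounded kernels one only gets lower semicontinuity and a more delicate argument. Two small points you use implicitly and should record: cluster points are subsequential limits because $\mathcal{M}(\Sb^{d-1})$ is weak-star compact and metrizable (as $C(\Sb^{d-1})$ is separable), and any weak-star limit is again a probability measure because the sphere is compact; both are true and close the remaining gaps.
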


\begin{theorem}\label{thm:tight}
Let $0<p<2$. If   $X_N=\left \{x_1,\ldots,x_N\right\}\in S(N,2)$ is  an optimal configuration for \eqref{equ:min} for $N\ge 2$, then (treated as a matrix)
\begin{equation}\label{equ:asytight}
\lim_{N\rightarrow\infty}\frac{1}{N}X_NX_N^T=\frac{1}{2}I_2.\end{equation} In particular, we have $$\|X_NX_N^T -\tfrac{N}{2}  I_2\|=o(N).$$
\end{theorem}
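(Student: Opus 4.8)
The plan is to deduce the theorem from a single structural fact about the minimizers of the continuous problem~\eqref{equ:pfp}, namely that every minimizer is a \emph{tight} probabilistic frame, and then to extract that fact by Fourier analysis on the circle. First I would observe that the matrix in~\eqref{equ:asytight} is precisely the second--moment matrix of the normalized counting measure: writing $M(\mu):=\int_{\Sb^1}xx^T\,d\mu(x)$, one has $\tfrac1N X_NX_N^T=M(\nu_{X_N})$, and the map $\mu\mapsto M(\mu)$ is weak-$*$ continuous since $x\mapsto xx^T$ is continuous and bounded on $\Sb^1$. By Proposition~\ref{prop:asymp} every weak-$*$ cluster point of $(\nu_{X_N})$ minimizes $\pfp_{p,2}$, so it suffices to prove the following claim: \emph{for $0<p<2$, every minimizer $\mu^*$ of $\pfp_{p,2}$ satisfies $M(\mu^*)=\tfrac12 I_2$.} Granting this, every cluster point of $\bigl(M(\nu_{X_N})\bigr)$ equals $\tfrac12 I_2$; since $\mathcal M(\Sb^1)$ is weak-$*$ compact, the full sequence $M(\nu_{X_N})$ then converges to $\tfrac12 I_2$, which is~\eqref{equ:asytight}, and consequently $\|X_NX_N^T-\tfrac N2 I_2\|=N\,\|M(\nu_{X_N})-\tfrac12 I_2\|=o(N)$.

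To prove the claim I would pass to Fourier series on the circle. Parametrizing $x=(\cos\theta,\sin\theta)$ and using $|\langle x,y\rangle|^p=|\cos(\theta-\phi)|^p$, the kernel is translation invariant, so with the even, $\pi$-periodic expansion
\[
|\cos\psi|^p=\gamma_0+\sum_{n\ge1}c_n\cos(2n\psi),\qquad c_n=\frac{2^{1-p}\,\Gamma(p+1)}{\Gamma\!\left(1+\tfrac p2+n\right)\Gamma\!\left(1+\tfrac p2-n\right)},
\]
Parseval gives $\pfp_{p,2}(\mu)=\gamma_0+\sum_{n\ge1}c_n\,|m_n(\mu)|^2$, where $m_n(\mu):=\int_{\Sb^1}e^{2in\theta}\,d\mu$. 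Expanding $M(\mu)$ through $\cos^2\theta,\sin^2\theta,\cos\theta\sin\theta$ shows that $M(\mu)=\tfrac12 I_2$ holds if and only if $m_1(\mu)=0$. Thus the claim reduces to proving that $m_1(\mu^*)=0$ at every minimizer.

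This is where I expect the main obstacle to lie. The naive hope is that all $c_n\ge0$, so that $\pfp_{p,2}$ would be minimized by the uniform measure (all $m_n=0$), which is tight; but this \emph{fails} for $0<p<2$. Indeed the sign of $c_n$ is that of $1/\Gamma(1+\tfrac p2-n)$, and a short bookkeeping gives $\sign(c_n)=(-1)^{n-1}$, so the even modes carry negative coefficients and the continuous minimizer is genuinely non-uniform. What does survive, and what I would exploit, is that $c_n>0$ for every \emph{odd} $n$; this is exactly where $0<p<2$ enters, and the pattern breaks for $p>2$, consistent with the phase transition at $p=2$.

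To isolate the odd modes I would symmetrize by the rotation $R$ through $\tfrac{\pi}{2}$. Since $m_n(R_\#\mu)=(-1)^n m_n(\mu)$, the average $\nu:=\tfrac12(\mu^*+R_\#\mu^*)$ is again a probability measure, with $m_n(\nu)=m_n(\mu^*)$ for even $n$ and $m_n(\nu)=0$ for odd $n$. Hence
\[
\pfp_{p,2}(\mu^*)-\pfp_{p,2}(\nu)=\sum_{\substack{n\ge1\\ n\ \mathrm{odd}}}c_n\,|m_n(\mu^*)|^2\ge0,
\]
because each odd-$n$ coefficient is positive. As $\mu^*$ is a global minimizer and $\nu$ is admissible, the left-hand side must vanish, forcing $m_n(\mu^*)=0$ for all odd $n$; in particular $m_1(\mu^*)=0$, i.e. $M(\mu^*)=\tfrac12 I_2$. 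This establishes the claim and hence the theorem. The only remaining items are routine: the Gamma-function evaluation of $c_n$ and the sign computation.
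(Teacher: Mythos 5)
Your proposal is correct, but it takes a genuinely different route from the paper's. The paper's proof is essentially three lines because it imports the uniqueness statement of \cite[Theorem 4.9]{EhlOko2012}: for $d=2$ and $0<p<2$ the \emph{unique} minimizer of \eqref{equ:pfp} is the normalized counting measure $\sigma_e$ on $\{e_1,e_2\}$. Combined with Proposition \ref{prop:asymp} and weak-$*$ compactness, this upgrades cluster-point convergence to convergence of the full sequence $\nu_{X_N}$ to $\sigma_e$, after which \eqref{equ:asytight} follows by testing against the quadratic monomials $x^2$, $y^2$, $xy$ --- the same reduction to the second-moment matrix that opens your argument. You instead avoid the uniqueness input entirely and prove the weaker structural fact that \emph{every} minimizer $\mu^*$ of \eqref{equ:pfp} satisfies $M(\mu^*)=\tfrac12 I_2$, via Parseval, the sign pattern $\sign(c_n)=(-1)^{n-1}$, and the $\tfrac{\pi}{2}$-rotation symmetrization killing the odd modes. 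I checked the details and they hold: your $c_n$ matches the classical evaluation $\int_0^{\pi/2}\cos^p x\,\cos(2nx)\,dx=\frac{\pi}{2^{p+1}}\frac{\Gamma(p+1)}{\Gamma(1+\frac p2+n)\Gamma(1+\frac p2-n)}$; for $0<p<2$ one has $1+\tfrac p2-n\in(1-n,2-n)$, so $\Gamma(1+\tfrac p2-n)$ alternates in sign starting positive at $n=1$, giving $c_n>0$ strictly for all odd $n$ (this strictness is what lets you conclude $m_n(\mu^*)=0$ from the forced equality, and your caveat is also right: for $p\in(2,4)$ one gets $c_3<0$, so the symmetrization step would no longer close); the absolute summability $|c_n|=O(n^{-1-p})$ justifies the term-by-term Parseval identity for measures; $M(\mu)=\tfrac12 I_2$ is indeed equivalent to $m_1(\mu)=0$; and the subsequence argument correctly converts ``all cluster points are minimizers'' into convergence of $M(\nu_{X_N})$. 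As for what each approach buys: the paper's route is shorter and yields the strictly stronger conclusion $\nu_{X_N}\to\sigma_e$ weak-$*$ (identifying the limiting configuration as copies of an orthonormal basis, not merely its frame operator), while yours is self-contained modulo one classical integral, needs no uniqueness of the continuous minimizer, and makes visible at the level of Fourier modes why the structure of the problem changes at $p=2$. A nice consistency check for your expansion: $\sigma_e$ has $m_n=1$ for even $n$ and $m_n=0$ for odd $n$, exactly saturating the negative even coefficients you identified.
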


\begin{proof}
When $d=2$ and $0<p<2$, it is proved in \cite[Theorem 4.9]{EhlOko2012} that the unique minimal measure of \eqref{equ:pfp} is the normalized counting measure on $\{e_1,e_2\}$, denoted by $\sigma_e$. Since every cluster point of $\{\nu_{X_N}\}$ weak* converges to $\sigma_e$, $\{\nu_{X_N}\}$ itself weak* converges to $\sigma_e$. Thus  for every continuous function $f$ defined on $\Sb^{1}$, 
\begin{equation}\label{equ:ws}\lim_{N\rightarrow\infty}\int_{\Sb^{1}} f(x,y)d\nu_{X_N}=\int_{\Sb^{1}} f(x,y)d\sigma_e=\frac{1}{2}(f(1,0)+f(0,1)).
\end{equation}
 
Let $f(x,y)=x^2$. We rewrite the points in $X_N$ as $x_i=(x_{i1},x_{i2})$, then  \eqref{equ:ws} becomes 
\begin{equation}\label{equ:xx}
\lim_{N\rightarrow\infty}\frac{1}{N}\sum_{i=1}^Nx_{i1}^2=\frac{1}{2}.
\end{equation}

Let $f(x,y)=y^2$. Then  \eqref{equ:ws} becomes 
\begin{equation}\label{equ:yy}
\lim_{N\rightarrow\infty}\frac{1}{N}\sum_{i=1}^Nx_{i2}^2=\frac{1}{2}.
\end{equation}

Let $f(x,y)=xy$. Then  \eqref{equ:ws} becomes 
\begin{equation}\label{equ:xy}
\lim_{N\rightarrow\infty}\frac{1}{N}\sum_{i=1}^Nx_{i1}x_{i2}=0.
\end{equation}

The combination of \eqref{equ:xx}, \eqref{equ:yy}, and \eqref{equ:xy} implies \eqref{equ:asytight}.
\end{proof}

We have seen that $\Xperp{3}$ is the unique global minimizer of $\fp_{p,3,2}$ for $p\in(0, \frac{\log 3}{\log 2})$, and after $p=\frac{\log 3}{\log 2}$, the optimal configuration transitions to $X^{(h)}_3$. When $N$ is increased to 5, this phase transitioning happens at a larger value of $p\approx 1.77766$ (See Conjecture~\ref{conjecn5}). When $N=7$, our numerical experiments suggest that this transitioning happens at $p\approx1.84$.

Let 
$$p(N):=\sup\{p_0: \fp_{p,N,2}(X_N^\perp)=\Fc_{p,N,2}\text{ for }0<p<p_0\}.$$
We pose the following  conjecture: 

\begin{conjecture}\label{conj:1}
    $p(2k+1)$ is monotone increasing as $k\geq1$ increases, and
    $$\lim_{ k\to\infty} p(2k+1)=2.$$
\end{conjecture}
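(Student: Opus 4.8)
The plan is to separate the two assertions of the conjecture and reduce each to a single sharp inequality. For the limit, the upper bound $p(2k+1)\le 2$ is immediate: at $p=2$ one has $\fp_{2,N,2}(X)=\sum_{k\neq\ell}\langle x_k,x_\ell\rangle^2$, which is minimized precisely by tight frames, with value $N^2/2-N$. Since an odd $N$ cannot be split into two equal halves, $\Xperp{N}$ is not tight, and $\fp_{2,N,2}(\Xperp{N})=(N-1)^2/2=N^2/2-N+\tfrac12>\Fc_{2,N,2}$. Because $p\mapsto \fp_{p,N,2}(\Xperp{N})-\Fc_{p,N,2}$ is continuous and strictly positive at $p=2$, it stays positive on a left neighborhood of $2$, so $\Xperp{N}$ is not optimal there and $p(N)\le 2$.

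The lower bound $\liminf_{k\to\infty}p(2k+1)\ge 2$ is where I would use the asymptotic machinery. Fix $p\in(0,2)$. By \cite[Theorem 4.9]{EhlOko2012} the unique minimizer of $\pfp_{p,2}$ is $\sigma_e$, so by Proposition \ref{prop:asymp} the normalized counting measures of any optimal configurations $X_N$ converge weak$^*$ to $\sigma_e$; equivalently, all but $o(N)$ of the angles (taken mod $\pi$) cluster within any fixed $\eps$ of $\{0,\pit\}$. The plan is then to \emph{upgrade} this weak convergence into an exact identification: to show that for $N$ large every optimal $X_N$ is forced onto the two axes and equals $\Xperp{N}$, via a stationarity analysis at the two cluster centers together with a strict local convexity/rigidity estimate. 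This would give $p<p(2k+1)$ for all large $k$, hence $\liminf p(2k+1)\ge p$ for every $p<2$, and combined with the upper bound yields $\lim p(2k+1)=2$.

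For the monotonicity $p(2k+1)\le p(2k+3)$ I would argue by the removal identity already implicit in the proof of Theorem \ref{thm:main}. Writing $M=N+2$, $w_{i\ell}=|\langle y_i,y_\ell\rangle|^p$ and $s_i=\sum_{\ell\neq i}w_{i\ell}$ for a configuration $Y=\{y_1,\dots,y_M\}$, deleting the pair $\{i,j\}$ gives
$$\fp_{p,M,2}(Y)=\fp_{p,N,2}\bigl(Y\setminus\{y_i,y_j\}\bigr)+2\bigl(s_i+s_j-w_{ij}\bigr).$$
If $p<p(N)$ then $\Fc_{p,N,2}=(N-1)^2/2$, so every deleted configuration has potential at least $(N-1)^2/2$; applying this to an optimal $M$-configuration and choosing the pair maximizing $s_i+s_j-w_{ij}$ yields
$$\Fc_{p,M,2}\ge \frac{(N-1)^2}{2}+2\max_{i\neq j}\bigl(s_i+s_j-w_{ij}\bigr).$$
Since $\fp_{p,M,2}(\Xperp{M})=(N+1)^2/2$, the monotonicity follows provided one can prove the sharp pairing bound $\max_{i\neq j}(s_i+s_j-w_{ij})\ge N$ for the optimal configuration, for every $p<p(N)$; one checks that this bound holds with equality for $\Xperp{M}$.

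The main obstacle is precisely this pairing bound, and it is the same obstruction that limits Theorem \ref{thm:main}. The quantity $\max_{ij}(s_i+s_j-w_{ij})$ is exactly what the one-variable inequalities of Section \ref{sec2} (Lemmas \ref{caseIIa}--\ref{lem:eps}, together with the linearizing comparison with $V$) control, but only for $p\le \log3/\log2$ (or $p\le 1.73$). A naive averaging over pairs is useless here: the mean of $s_i+s_j-w_{ij}$ equals $\tfrac{(2M-3)}{M(M-1)}\fp_{p,M,2}$, which at the conjectured optimal value $(M-1)^2/2$ is about $M-\tfrac52<N$, so one genuinely must \emph{locate} the right pair rather than bound the average. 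Extending the comparison function $V$ and the accompanying convexity lemmas to the full range $p\in(0,2)$ --- equivalently, proving either the pairing bound above or the rigidity step in the limit argument for all $p<2$ --- is exactly where the present techniques break down, and is, I expect, the crux of the conjecture.
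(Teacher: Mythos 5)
You should note at the outset that the statement you set out to prove is Conjecture~\ref{conj:1}: the paper offers \emph{no} proof of it, only the numerical evidence of Section~\ref{sec3} and the weak$^*$ asymptotics of Proposition~\ref{prop:asymp} and Theorem~\ref{thm:tight}, so there is no argument to compare yours against line by line; your proposal must stand on its own, and it does not yet close. The parts you actually prove are correct: at $p=2$ the Benedetto--Fickus theorem gives $\Fc_{2,N,2}=N^2/2-N$, attained exactly by unit-norm tight frames, while $\fp_{2,N,2}(\Xperp{N})=(N-1)^2/2$ exceeds this by $\tfrac12$ for odd $N$, and continuity of $p\mapsto \Fc_{p,N,2}$ then yields $p(N)\le 2$ (in fact $p(N)<2$). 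Your removal identity is right, the mean of $s_i+s_j-w_{ij}$ over pairs is indeed $\tfrac{2M-3}{M(M-1)}\fp_{p,M,2}\approx M-\tfrac52<N$ at the conjectured optimum, and your conclusion that one must \emph{locate} a good pair rather than average is exactly the structure of the paper's own induction: inequality \eqref{diff} in the proof of Theorem~\ref{thm:main} is precisely your pairing bound, established there by a careful choice of $\{i_0,j_0\}$ in Cases IIa and IIb.

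The genuine gaps are the two steps you yourself flag, and they are the entire content of the conjecture, not technical details. First, the pairing bound $\max_{i\neq j}(s_i+s_j-w_{ij})\ge N$ for every optimal $(N+2)$-point configuration and \emph{every} $p<p(N)$: the paper's one-variable toolkit (Lemmas~\ref{GG}--\ref{lem:eps}) delivers this only for $p\le \log 3/\log 2$, and several ingredients break earlier than $2$ by more than a constant's worth of slack --- e.g.\ the endpoint value in Lemma~\ref{LL} drops below $2$ for $p>\log 3/\log 2$ (as the remark following it notes), and Lemmas~\ref{PP} and \ref{RR} require $p\le 1.73$ --- so monotonicity of $p(2k+1)$ is reduced to an inequality no current lemma supplies. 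Second, the rigidity upgrade: Theorem~\ref{thm:tight} only controls $o(N)$ of the mass, and even if you proved that for each fixed $p<2$ every optimal configuration equals $\Xperp{N}$ once $N\ge K(p)$, that alone does not give $p(2k+1)>p$, because $p(N)$ is defined as a sup over an \emph{initial interval}: you would need optimality simultaneously for all $p'\in(\log 3/\log 2,\,p]$, hence either $K(p')$ uniform on compact subintervals or the (unproven, and in view of the discontinuous transitions in Conjecture~\ref{conjecn5} not obvious) fact that the set of $p$ where $\Xperp{N}$ is optimal is an interval. Your sketch does not address this quantifier issue. In short: a sound skeleton with correct boundary computations, but the two named inequalities are open, and with them the conjecture.
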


\subsection{Numerical Results}
In this section we present some numerical results about the solutions to the minimization of the frame potential \eqref{eq:pfrpot}. In particular, we used the MATLAB minimization function \verb"fminsearch"  with random initial configuration. In order to avoid local minima we ran the minimization 3000 times for every value of $p$ and we chose the configuration with minimal potential (we ignored the cases where the function returned an error with the indication of not being able to find a minimum).  For the code of these experiments, we refer the readers to~\cite{radel}. One of the outcomes of these numerical results is  Conjecture~\ref{conjecn5} about the minimizers of $\fp_{p, 5, 2}$ for $p\in (0, \infty)$. We performed similar numerical experiments for other odd values of $N$, but the behavior of the minimizers seems to get more intricate. In particular, as $p$ increases, the number  phase transitions seems to be increasing with $N$. As such we first focus on the case  $N=5$, for which we consider the following special configurations. In Theorem~\ref{thm:main} we have established that $X_5^\perp$ is the minimizer for $\Fc_{p, 5, 2}$ for all $p\leq \frac{\log 3}{\log 2}\approx 1.58$.

 Define
\begin{itemize}
    \item $Y(\alpha)=X\{0,0,\frac{\pi}{2}-\alpha,\frac{\pi}{2},\frac{\pi}{2}+\alpha\}$
    \item $Z(\alpha)=X\{-\alpha,-\alpha,0,\alpha,\alpha  \}$.
\end{itemize}

\begin{conjecture}\label{conjecn5}
    The absolute minimizer for $\fp_{p,5,2}(X)$ is given by
    $$\begin{cases}
    X_5^\perp & 0\le p\le p_1,\\
    Y(\alpha(p)) & p_1\le p\le p_2 \\
    Z(\alpha(p)) & p_2 \le p \le 2\\
    X_5^{(h)} & p\geq2
    \end{cases}.
    $$
    where $p_1=1.77766251887019$ and $p_2=1.78329970946521$ are given to a precision of $10^{-14}$.
    Moreover, this minimizer is unique up to rotation and antipodal reflections, for any $p$ not in endpoints of the intervals.
    
\end{conjecture}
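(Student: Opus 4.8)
The plan is to turn the global problem into a finite comparison among a handful of one-parameter families and to treat each family with the one-variable calculus developed in Section~\ref{sec2}. The first and decisive reduction is to show that a global minimizer of $\fp_{p,5,2}$ may be taken \emph{reflection symmetric}, i.e.\ invariant under reflection across some axis. Granting this, five lines split according to a pattern $(a,b,c)$ with $a+b+2c=5$, where $a$ lines lie along the mirror, $b$ along its perpendicular, and $c$ is the number of genuine swapped pairs; each pattern yields a configuration determined, after quotienting by rotation and reflection, by one internal angle. The named configurations are exactly the extremal shapes of these patterns: $\Xperp{5}$ ($c=0$), $Y(\alpha)$ ($a=1,b=2,c=1$), $Z(\alpha)=X\{-\alpha,-\alpha,0,\alpha,\alpha\}$ (a coincident double pair), and $X_5^{(h)}$ (two distinct single pairs). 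Thus the conjecture reduces to (i) optimizing each family in its single angle, and (ii) comparing the resulting minima as functions of $p$.

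For step (i) I would use the explicit reductions, denoting the half-potentials $g_Y(\alpha)$ and $g_Z(\alpha)$,
\[
g_Y(\alpha):=\tfrac12\fp_{p,5,2}(Y(\alpha))=1+4\sin^p\alpha+2\cos^p\alpha+|\cos 2\alpha|^p,\qquad g_Z(\alpha):=\tfrac12\fp_{p,5,2}(Z(\alpha))=2+4\cos^p\alpha+4|\cos 2\alpha|^p,
\]
differentiate, and rewrite the stationarity conditions through $G(\theta)=\cos^{p-1}\theta\sin\theta$ from \eqref{equ:G}; e.g.\ the critical equation for $Z$ on $(\tfrac{\pi}{4},\pit)$ reads $G(\alpha)=2G(\pi-2\alpha)$. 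The monotonicity and averaging statements of Lemmas~\ref{GG}--\ref{RR} are designed precisely to show that such equations have a single interior solution that is a genuine minimum, which would yield the curves $\alpha(p)$ and the in-family minima $m_Y(p)$, $m_Z(p)$. I note that $|\cos 2\alpha|^p$ is only $C^1$ at $\alpha=\tfrac{\pi}{4}$ (the paired vectors orthogonal), so these statements must be applied on each side of orthogonality separately, mirroring the Case~I/Case~II split in the proof of Theorem~\ref{thm:main}.

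For step (ii), the thresholds are fixed by crossovers of the in-family minima: $p_1$ is where the interior minimum of $g_Y$ first meets the boundary value $g_Y(0)=\tfrac12\fp_{p,5,2}(\Xperp{5})=4$ (a tangency condition $g_Y(\alpha_1)=4$, $g_Y'(\alpha_1)=0$), $p_2$ is where $m_Y(p_2)=m_Z(p_2)$, and the passage to $X_5^{(h)}$ at $p=2$ rests on the observation that $Z(\alpha)$ with $\sin^2\alpha=5/8$ is a tight frame, hence attains the minimal value of $\fp_{2,5,2}$ shared by $X_5^{(h)}$ at $p=2$; for $p$ slightly above $2$ the comparison must then show $X_5^{(h)}$ strictly wins, with the even-integer cases $p=4,6$ already settled in \cite[Theorem 3.7]{CGGKO} serving as anchors. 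Because each threshold equality is transcendental in $p$, $p_1$ and $p_2$ can only be pinned down numerically; a rigorous certificate would bound the monotone differences $m_Y-4$ and $m_Y-m_Z$ by interval arithmetic to localize and isolate the crossings to the stated precision. Uniqueness away from $p_1,p_2,2$ then follows because off the crossover values a single family strictly wins and, within it, the interior minimizer is unique up to the residual rotation/antipodal symmetry; at the three crossover values two families coexist, which is exactly where uniqueness is claimed to fail.

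I expect the genuine difficulty to be the very first reduction—proving that the minimizer is reflection symmetric and that no configuration outside the enumerated patterns can win. In the range $p\le\frac{\log 3}{\log 2}$ this is bypassed: one dominates $W_p$ by the piecewise-linear $V$ and runs the induction of Theorem~\ref{thm:main}. For $p\in(p_1,2)$ the minimizer genuinely moves and is \emph{not} $\Xperp{5}$, so no single linear lower bound can be tight and the induction is unavailable; this is precisely why the remark after Theorem~\ref{thm:main} reports that the present technique stalls near $1.77$. A complete proof therefore needs a new ingredient—either a symmetrization/rearrangement argument forcing reflection symmetry, or a global control of the (non-smooth at orthogonality) critical set of the five-line energy—and I view establishing that, rather than the one-variable analysis of each family, as the crux.
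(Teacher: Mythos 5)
You should first be clear about what the paper itself does here: this statement is a \emph{conjecture}, and the paper offers no proof of it at all. Its entire support is computational --- repeated \texttt{fminsearch} runs from random initial configurations, plots of the minimal potential, of the angle $\alpha(p)$, and of the numerically estimated derivative $d\mathcal{F}_{p,5,2}/dp$, whose jumps locate the transitions at $p_1,p_2,2$. So your proposal cannot be ``the same approach as the paper'' or a correct alternative proof; it is a proof \emph{program}, and it must be judged on whether it closes the problem. It does not, and you say so yourself: the reduction to reflection-symmetric configurations (equivalently, to the enumerated patterns) is exactly the missing ingredient, and nothing in the paper's machinery supplies it. To your credit, the parts that can be checked are right: the reductions $g_Y(\alpha)=1+4\sin^p\alpha+2\cos^p\alpha+|\cos 2\alpha|^p$ and $g_Z(\alpha)=2+4\cos^p\alpha+4|\cos 2\alpha|^p$ are correct, the stationarity condition $G(\alpha)=2G(\pi-2\alpha)$ for $Z$ on $(\frac{\pi}{4},\pit)$ is correct, and the observation that $Z(\alpha)$ with $\sin^2\alpha=5/8$ is a unit-norm tight frame (so that the $p=2$ minimum is shared, and uniqueness necessarily fails at $p=2$) is correct and consistent with the conjecture's exclusion of endpoint values.

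Beyond the admitted symmetrization gap, there are two concrete failures in the program as written. First, the one-variable toolkit you invoke does not cover the relevant range of $p$: Lemma \ref{GG}(c),(d) are proved only for $1\le p\le 1.73$ (the proof rests on a sign estimate for $G'''$ via the rational expression $6p+28/p-16/p^2+4/p^3-22$, which is negative only up to about $1.73$), Lemma \ref{LL} requires $p\le\frac{\log 3}{\log 2}\approx 1.585$, and Lemmas \ref{PP}, \ref{RR} again cap at $1.73$ --- yet your crossovers sit at $p_1\approx 1.7777$ and $p_2\approx 1.7833$, and the whole $Y$-versus-$Z$ comparison lives in $(1.77,2)$. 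So the claim that these lemmas ``are designed precisely'' to give a unique interior critical point of $g_Y$ and $g_Z$ is unfounded where you need it; each such statement would have to be re-proved for $p$ near $2$, where $G'$ is no longer concave. Second, your pattern count is off in the one case that matters: the reflection-symmetric pattern with two swapped pairs, $X(\{0,\theta,-\theta,\gamma,-\gamma\})$, is a \emph{two}-parameter family, not a one-angle family; it contains $Z(\alpha)$ ($\theta=\gamma$), $X_5^{(h)}$ ($\theta=\pi/5,\gamma=2\pi/5$), and indeed (degenerately) $\Xperp{5}$ and $Y$ as well. Even granting symmetry, step (i) is therefore a two-variable minimization, and showing that its minimizers collapse onto the curves $Z(\alpha(p))$ for $p_2<p<2$ and jump to $X_5^{(h)}$ at $p=2$ is an open two-dimensional critical-point analysis, not a finite comparison of single-angle families. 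Finally, for $p$ slightly above $2$ your ``anchor'' from \cite[Theorem 3.7]{CGGKO} gives minimality of $X_5^{(h)}$ only at the even integers $p=4,6$ and uniqueness only for $p>6$, so the segment $p\in(2,6)\setminus\{4\}$ would remain open under your plan too.
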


\begin{figure}[hbt]
 \caption{Numerical minimum value of $\fp_{p,5,2}$}
 \label{fig:minval}
\begin{tabular}{cc}
$1.77<p<2.1$ & $1.77<p<1.8$\\
 \begin{tikzpicture}[scale=0.6]
        \begin{axis}[
            every axis plot/.append style={thick},
            xlabel=$p$,
            ylabel=$\mathcal F_{p,5,2}$,
            width=0.5*\paperwidth,
            height=0.3*\paperheight]
            \addplot + [mark=none, color=blue,restrict x to domain=1.75:1.77766] table [x=$p$, y expr=\thisrow{$f_{min}$}-5] {data.dat};
            \addlegendentry{$X_5^\perp$}
            \addplot + [mark=none, color=gray,restrict x to domain=1.77766:1.7832] table [x=$p$, y expr=\thisrow{$f_{min}$}-5] {data.dat};
            \addlegendentry{$Y(\alpha)$}
            \addplot + [mark=none, color=red,restrict x to domain=1.7832:2.0] table [x=$p$, y expr=\thisrow{$f_{min}$}-5] {data.dat};
            \addlegendentry{$Z(\alpha)$}
            \addplot + [mark=none, color=green,restrict x to domain=2.0:2.1] table [x=$p$, y expr=\thisrow{$f_{min}$}-5] {data.dat};
            \addlegendentry{$X_5^{(h)}$}
             \end{axis}; 
    \end{tikzpicture}&
     \begin{tikzpicture}[scale=0.6]
        \begin{axis}[
            every axis plot/.append style={thick},
            xlabel=$p$,
            ylabel=$\mathcal F_{p,5,2}$,
            width=0.5*\paperwidth,
            height=0.3*\paperheight]
            \addplot + [mark=none, color=blue,restrict x to domain=1.75:1.77766] table [x=$p$, y expr=\thisrow{$f_{min}$}-5] {data.dat};
            \addlegendentry{$X_5^\perp$}
            \addplot + [mark=none, color=gray,restrict x to domain=1.77766:1.7832] table [x=$p$,y expr=\thisrow{$f_{min}$}-5] {data.dat};
            \addlegendentry{$Y(\alpha)$}
            \addplot + [mark=none, color=red,restrict x to domain=1.7832:1.8] table [x=$p$, y expr=\thisrow{$f_{min}$}-5] {data.dat};
            \addlegendentry{$Z(\alpha)$}
            \end{axis}
    \end{tikzpicture}

\end{tabular}

 \end{figure}

Conjecture~\ref{conjecn5} is illustrated in Figure~\ref{fig:minval} with the plot of   the minimal potential as a function of $p$, and 
the type of the minimal configuration.
The value of $\alpha$  in $Y(\alpha)$ and $Z(\alpha)$ as a function of $p$ is shown in Figure \ref{fig:typeYZ}.
It appears that the transition of the configuration at points $p=p_1$ and $p=p_2$ and $p=2$ is non-continuous. Note that using Table \ref{tab:R2known}, we only have $X_5^{(h)}$ is a unique minimizer for $p>6$. 

In addition, there is a discontinuity in the derivative $\frac{d \mathcal F_{p,5,2}}{dp}$ as shown in Figure \ref{fig:der}(a). Note that the tiny zig-zag at the bottom left corner of Figure \ref{fig:der}(a) corresponds to phase transitioning at $p_1, p_2$, as described in Conjecture \ref{conjecn5}. The derivative plot is effective at locating the phase transitions in general.

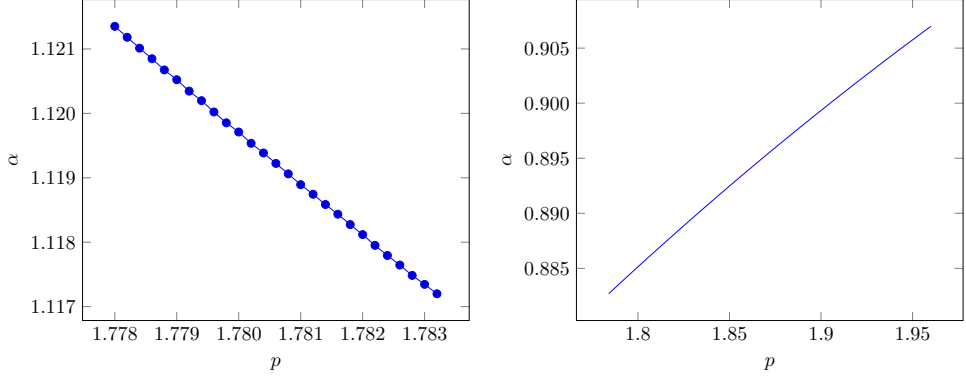
\begin{figure}[hbt]
\caption{The value of angles for Type $Y$ and Type $Z$}
\label{fig:typeYZ}
\begin{tabular}{cc}
Type $Y$&Type $Z$\\
 \begin{tikzpicture}[scale=0.75]
        \begin{axis}[
            xlabel=$p$,
            ylabel=$\alpha$,
            y tick label style={
        /pgf/number format/.cd,
            fixed,
            fixed zerofill,
            precision=3,
        /tikz/.cd
    },
    x tick label style={
        /pgf/number format/.cd,
            fixed,
            fixed zerofill,
            precision=3,
        /tikz/.cd
    },]
            \addplot + [ color=blue, restrict x to domain=1.778:1.7832] table [x=$p$, y=$c$]{data.dat};
        \end{axis}
    \end{tikzpicture}&
    \begin{tikzpicture}[scale=0.75]
        \begin{axis}[
            xlabel=$p$,
            ylabel=$\alpha$,
            y tick label style={
        /pgf/number format/.cd,
            fixed,
            fixed zerofill,
            precision=3,
        /tikz/.cd
    },
   ]
            \addplot + [mark=none, color=blue, restrict x to domain=1.784:1.96] table [x=$p$, y=$d$]{data.dat};
        \end{axis}
    \end{tikzpicture}

\end{tabular}
\end{figure}

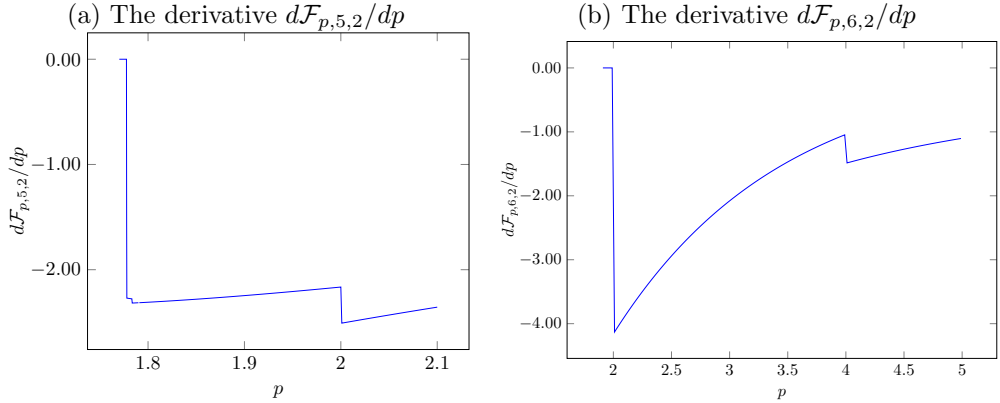
\begin{figure}[hbt]
\caption{Derivatives of the minimal frame potential}
\label{fig:der}
\begin{tabular}{cc}
(a) The derivative $d\mathcal F_{p,5,2}/dp$&(b) The derivative $d\mathcal F_{p,6,2}/dp$\\
 \begin{tikzpicture}[scale=0.74]
        \begin{axis}[
            xlabel=$p$,
            ylabel=$d\mathcal F_{p,5,2}/dp$,
            y tick label style={
        /pgf/number format/.cd,
            fixed,
            fixed zerofill,
            precision=2,
        /tikz/.cd
    },
   ]
            \addplot + [mark=none, color=blue,restrict x to domain=1.77:2.1] table [x=$p$, y=der]{data.dat};
        \end{axis}
    \end{tikzpicture}&
    \begin{tikzpicture}[scale=0.62]
        \begin{axis}[
            xlabel=$p$,
            ylabel=$d\mathcal F_{p,6,2}/dp$,
            width=0.5*\paperwidth,
            height=0.3*\paperheight,
            y tick label style={
        /pgf/number format/.cd,
            fixed,
            fixed zerofill,
            precision=2,
        /tikz/.cd
    },
   ]
            \addplot + [mark=none, color=blue] table [x=$p$, y=$f_{min}$]{pvec6b.dat};
        \end{axis}
    \end{tikzpicture}
    \end{tabular}
\end{figure}

The derivative was numerically estimated using symmetrical difference as $f'(x)\approx(f(x+d)-f(x-d))/2d$. For comparison we have added the plots of the derivative of the potential for $N=6$ points  (Figure \ref{fig:der}(b)) and for $N=7$ points (Figures \ref{fig:7pts}). Notice the few phase transitions there. 

For $N=6$  the first phase transition is at $p=2$ (as expected) and a second at $p=4$. Up to $p=2$ the optimal configuration is $X_6^\perp$. For $2\leq p \leq 4$ the optimal configuration is $E=X\{0,0,\frac{\pi}{3},\frac{\pi}{3},\frac{2\pi}{3},\frac{2\pi}{3}\}$. For $p\geq 4 $ the optimal configuration is $X^{(h)}_6 $ .

For $N=7$ It seems that there are more phase transitions, at $p\approx 1.84,  p=2, p\approx 3.5$ and $p=4$. Up to $p\approx 1.84$ the optimal configuration is $X^{\perp}_7$. For $p>4$ it is $X^{(h)}_7$. We have not characterized the  minimal configuration for the phases in between.

    \begin{figure}[hbt] 
    \caption{The derivative $d\mathcal F_{p,7,2}/dp$}
     \label{fig:7pts}
        \begin{tabular}{cc}
        
                \begin{tikzpicture}[scale=0.57]
            \begin{axis}[
                xlabel=$p$,
                ylabel=$d \mathcal F_{p,7,2}/dp$,
                width=0.5*\paperwidth,
                height=0.3*\paperheight,
                y tick label style={
            /pgf/number format/.cd,
                fixed,
                fixed zerofill,
                precision=2,
            /tikz/.cd
        },
       ]
                \addplot + [mark=none, color=blue] table [x=$p$, y=$f_{min}$]{pvec7a.dat};
            \end{axis}
        \end{tikzpicture}&           
       \begin{tikzpicture}[scale=0.57]
            \begin{axis}[
                xlabel=$p$,
                ylabel=$d\mathcal F_{p,7,2}/dp$,
                width=0.5*\paperwidth,
                height=0.3*\paperheight,
                y tick label style={
            /pgf/number format/.cd,
                fixed,
                fixed zerofill,
                precision=2,
            /tikz/.cd
        },
       ]
                \addplot + [mark=none, color=blue] table [x=$p$, y=$f_{min}$]{pvec7b.dat};
            \end{axis}
        \end{tikzpicture}
        \end{tabular}
    \end{figure}
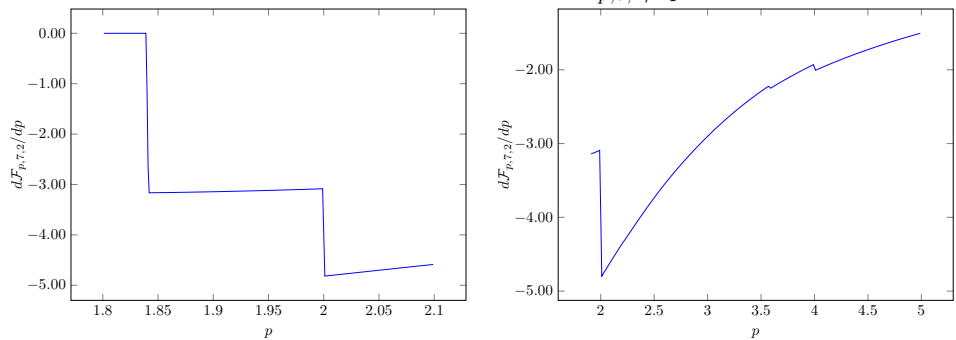

\bibliographystyle{amsplain}
\bibliography{pfpotential} 
\end{document}